\numberwithin{equation}{section}
\numberwithin{figure}{section}
\title{aaa}
\theoremstyle{plain}
\newtheorem{defn}{Definition}
\newtheorem{exmp}{Example}
\newtheorem{prop}{Proposition}
\numberwithin{equation}{section}
\begin{document}
\title{Multi-criteria decision making method  based on similarity \\
measures under single valued neutrosophic refined and interval
neutrosophic refined environments }

\author{Faruk Karaaslan}
\email{fkaraaslan@karatekin.edu.tr}
\address{Department of
Mathematics, Faculty of Sciences, \c{C}ank{\i}r{\i} Karatekin
University, 18100, \c{C}ank{\i}r{\i}, Turkey}

\begin{abstract}
In this paper, we propose three similarity measure methods for
single valued neutrosophic refined sets and interval neutrosophic
refined sets based on Jaccard, Dice and Cosine similarity measures
of single valued neutrosophic sets and interval neutrosophic sets.
Furthermore, we suggest two multi-criteria decision making method
under single valued neutrosophic refined environment and interval
neutrosophic refined environment, and give applications of proposed
multi-criteria decision making methods. Finally we suggested a
consistency analysis method for similarity measures between interval
neutrosophic refined sets and give an application to demonstrate
process of the method.
\end{abstract}

\keywords{Neutrosophic set, single valued neutrosophic  set,
interval neutrosophic  set, neutrosophic refined set, single valued
neutrosophic refined  set interval neutrosophic refined set,
similarity measure, decision making.}

\maketitle

\section{Introduction}
To overcome situations containing  uncertainty and inconsistency
data has been very important matter for researchers that study on
mathematical modeling and decision making which is very important in
some areas such as operations research, social economics, and
management science, etc. From  past to  present many studies on
mathematical modeling have been performed. Some of well-known
approximations are fuzzy set (FS) theory proposed by Zadeh
\cite{zadeh-65}, intuitionistic fuzzy set (IFS) theory introduced by
Atanassov \cite{atan-86} and interval valued intuitionistic fuzzy
set theory suggested by Atanassov and Gargov \cite{atan-89}. A FS is
identified by its membership function, IFS which is a generalization
of the FSs is characterized by membership and nonmembership
functions. Even though these set theories are very successful to
model some decision making problems containing uncertainty and
incomplete information, but they may not suffice to model
indeterminate and inconsistent information encountered in real
world. Therefore, Smarandache \cite{smar-98} introduced the concept
of neutrosophic set which is very useful to model problems
containing indeterminate and inconsistent information based on
neutrosophy which is a branch of philosophy. A neutrosophic set is
characterized by three functions called  truth-membership function
($T(x)$), indeterminacy-membership function ($I(x)$) and  falsity
membership function ($F(x)$). These functions are real standard or
nonstandard subsets of $]^-0, 1^+[$, i.e., $T(x): X \to ]^-0, 1^+[,
I(x): X \to ]^-0, 1^+[,$ and $F(x): X \to ]^-0, 1^+[$. Basic of the
neutrosophic set stands up to the non-standard analysis given by
Abraham Robinson in 1960s \cite{abra-66}. Smarandache \cite{smar-06}
discussed comparisons between neutrosophic set, paraconsistent set
and intuitionistic fuzzy set and he shown that the neutrosophic set
is a generalization of paraconsistent set and intuitionistic fuzzy
set. In some areas such as engineering and real scientific fields,
modeling of problems by using real standard or nonstandard subsets
of $]^-0, 1^+[$ may not be easy sometimes, to overcome this issue
concepts of single valued neutrosophic set (SVN-set) and interval
neutrosophic set (IN-set) were defined by Wang et al. in
\cite{wang-06} and \cite{wang-10}, respectively. Zhang et al.
\cite{zha-14} presented an application of IN-set in multi criteria
decision making problems. Some novel operations on interval
neutrosophic sets were defined by Broumi and Smarandache
\cite{bro-14a}. Bhowmik and Pal \cite{bho-10} defined concept of
intuitionistic neutrosophic set by combining intuitionistic fuzzy
set and neutrosophic set, and gave some set theoretical operations
of the intuitionistic neutrosophic set such as complement, union and
intersection. Ansari et al. \cite{ans-11} gave an application of
neutrosophic set theory to medical AI. Ye \cite{ye-15} proposed
concept of trapezoidal neutrosophic set by combining trapezoidal
fuzzy set with single valued neutrosophic set. He also presented
some operational rules related to this novel sets and proposed score
and accuracy function for trapezoidal neutrosophic numbers.

Set theories mentioned above are based on idea which each element of
a set appear only one time in the set. However, in some situations,
a structure containing repeated elements may be need. For instance,
while search in a dad name-number of children-occupation relational
basis. To model such cases, a structure called bags was defined by
Yager \cite{yag-86}. In 1998, Baowen \cite{bao-88} defined concepts
of fuzzy bags and their operations based on Peizhuang's theory of
set-valued statistics \cite{peiz-84} and Yager's bags theory
\cite{yag-86}. Concept of intuitionistic fuzzy bags (multi set)  and
its operations were defined by Shinoj and Sunil \cite{shi-12}, and
they gave an application in medical diagnosis under intuitionistic
fuzzy multi environment.

In 2013, Smarandache \cite{smar-13} put forward  n-symbol or
numerical valued neutrosophic logic which is as generalization of
n-symbol or numerical valued logic that is most general case of
2-valued Boolean logic, Kleene's and Lukasiewicz' 3-symbol valued
logics and Belnap's 4-symbol valued logic. Although existing set
theoretical approximations are generally successful in order to
model some problems encountered in real world, in some cases they
may not allow modeling of problems. For example, when elements in a
set are evaluated by SVN-values in different times  as
$t_1,t_2,...,t_p$, SVN-set may not be sufficient in order to express
such a case. Therefore, Ye and Ye \cite{ye-14svms} defined concept
of single valued neutrosophic multiset (refined) (SVNR-set) as a
generalization of single valued neutrosophic sets, and gave
operational rules for proposed novel set. In a SVNR-set, each of
truth membership values, indeterminacy membership values and falsity
membership values are expressed sequences called truth membership
sequence, indeterminacy membership sequence and falsity membership
sequence, respectively. SVNR-set allows modeling of problems
containing changing values with respect to times under
SVN-environment. In this regard, SVNR-set is an important tool to
model some problems. Bromi et al. \cite{bro-15nvinr} proposed
concept of n-valued interval neutrosophic set and set theoretical
operations on n-valued interval neutrosophic set (or interval
neutrosophic set) such as union, intersection, addition,
multiplication, scalar multiplication, scalar division,
truth-favorite. Also they developed a multi-criteria  group decision
making method and gave its an application in medical diagnosis.

Similarity measure has an important role many areas such as medical
diagnosis, pattern recognition, clustering analysis, decision making
and so on. There are many studies on similarity measures of
neutrosophic sets and IN-sets. For example, Broumi and Smarandache
\cite{bro-13sim} developed  some similarity measure methods between
two neutrosophic sets based on Hausdorff distances and used these
methods to calculate similarity degree between two neutrosophic
sets. Ye \cite{ye-14a-vecsm} proposed three similarity measure
methods used simplified neutrosophic sets (SN-sets) which is a
subclass of neutrosophic set that is more useful than neutrosophic
set  some applications in engineering and real sciences. He also
applied the these methods to decision making problem under
SN-environment. Ye and Zhang \cite{ye-14b-svnsm} suggested
similarity measure between SVN-sets based on minimum and maximum
operators. They also developed a multi-attribute decision making
method based on weighted similarity measure of SVN-sets, and gave
applications to demonstrate effectiveness of the proposed methods.
Ye \cite{ye-14c-cmdbsim} proposed two similarity measures between
SVN-sets by defining  a generalized distance measure, and presented
a clustering algorithm based on proposed similarity measure. In
2015, Ye \cite{ye-15} pointed out some drawbacks of similarity
measures given in \cite{ye-14a-vecsm} and proposed improved cosine
similarity measures of simplified neutrosophic sets (SN-sets) based
on cosine function. Moreover, he defined weighted cosine similarity
measures of SN-sets and gave an application in medical diagnosis
problem containing SN-information. Ye and Fub \cite{ye-15b} proposed
a similarity measure of SVN-sets based on tangent function and put
forward a medical diagnosis method called multi-period medical
diagnosis method based on suggested similarity measure and weighted
aggregation of multi-period information. They also gave a comparison
tangent similarity measures of SVN-sets with existing similarity
measures of SVN-sets. Furthermore, Ye \cite{ye-15c} introduced a
similarity measure of SVN-sets based on cotangent function and gave
an application in the fault diagnosis of steam turbine, and he gave
comparative analysis between cosine similarity measure  and
cotangent similarity measure in the fault diagnosis of steam
turbine. Majumdar and  Samanta \cite{maju-14} defined  notion of
distance between two SVN-sets and investigated its some properties.
They also put forward the a measure of entropy for a SVN-set.
Aydo\u{g}du \cite{ay-14} introduced a similarity measure between two
SVN-sets and developed an entropy of SVN-sets. Bromi and Smarandache
\cite{bro-15a} extended  similarity measures proposed in
\cite{ye-14c-cmdbsim} to IN-sets. Ye \cite{ye-15d} proposed a
similarity measure between two IN-sets based on Hamming and
Euclidian distances and gave a multi-criteria decision making
method.

Similarity measure on the NR-sets was studied Bromi and Smarandache
\cite{bro-14b}. Bromi and Smarandache extended improved cosine
similarity measure of SVN-sets to NR-sets and gave its an
application in medical diagnosis. Mondal and Pramanik \cite{mon-15a}
introduced cotangent similarity measure of NR-sets and studied on
its properties, and applied  cotangent similarity measure to
educational stream selection. Also they   proposed a similarity
measure method \cite{mon-15b} for NR-sets based on tangent function
and gave an application in multi-attribute decision making. In 2015.
Bromi and Smarandache \cite{bro-15a} presented a new similarity
measure method by extending  the Hausdorff distance to NR-sets, and
gave  an application of proposed method in medical diagnosis.

In this paper, we propose three similarity measure methods for
single valued refined sets (SVNR-sets) and interval neutrosophic
refined sets (INR-sets) by extending Jaccard, Dice and Cosine
similarity measures under SVN-value and IN-value given by Ye in
\cite{ye-14a-vecsm}. Also we give two multi-criteria decision making
methods by defining ideal solutions for best and cost criteria under
SVNR-environment and INR-environment. Furthermore, to determine
which similarity measure under INR-environment is  more appropriate
for considered problems, we give a consistency analysis method based
on developed similarity measure methods. To demonstrate processes of
the similarity measure methods and consistency analysis method, we
present real examples based on criteria and attributes given in
\cite{ye-14a-vecsm}. The rest of the article is organized as
follows. In section 2 some concepts  related to the SVN-sets and
IN-sets  and formulas  Jaccard, Dice and Cosine similarity measures
under SVN-environment are given. In section 3 for SVNR-set and
INR-sets similarity measures methods are developed as an extension
of vector similarity measures between SVN-sets and between IN-sets
given in \cite{ye-14a-vecsm}. In section 4 multicritera decision
making methods are developed under SVNR-environment  and
INR-environment, and given examples related to the developed
methods. In section 5 for similarity measures between two INR-sets,
a consistency analysis method is suggested and an application of
this method is given. In section 6 conclusions of the paper  and
studies that can  be made in future are presented.

\section{Preliminary}\label{ss}
In this section,  concepts of SVN-set, IN-set, SVNR-set  and INR-set
 and  some  set theoretical operations of them are presented required
in subsequent sections.

Throughout the paper, $X$ denotes initial universe, $E$ is a set of
parameters and $I_p=\{1,2,...p\}$ is an index set.

\begin{defn}\cite{wang-10}
Let $ X $ be a nonempty  set (initial universe), with a generic
element in $X$ denoted by $x$. A single-valued neutrosophic set
($SVN$-set) $A$ is characterized by a truth membership function
$t_A(x)$, an indeterminacy membership function $i_A(x) $, and a
falsity membership function $f_A(x)$ such that $t_A(x)$, $i_A(x) $,
$f_A(x) \in [0, 1]$ for all $x\in X$, as follows:
\end{defn}

When X is continuous, a SVN-sets A can be written as follows:

$$
  A=
  \int_X\left\langle t_A(x), i_A(x),
  f_A(x) \right\rangle/ x,  \; \; { \rm for \, all } \;x \; \in X.
$$
If $X$ is crisp set, a $SVN$-set $A$ can be written as follows:
$$
  A=
  \sum_{x}\left\langle t_A(x), i_A(x),
  f_A(x) \right\rangle/ x,  \; \; { \rm for \, all } \;x \; \in X.
$$

Also, finite $SVN$-set $A$ can be presented as follows:\\
  $A= \{
  \langle x_{1},t_A(x_{1}),
  i_A(x_{1}),
  f_A(x_{1})
  \rangle,
  \ldots,
  \langle x_{M},
  t_A(x_{M}),
  i_A(x_{M}),
  f_A(x_{M})
  \rangle
  \}$
  for all $ x \in X$.

Here  $ 0\leq t_A(x) +i_A(x) +f_A(x) \leq 3$ for all $ x \in X. $

Throughout this paper, initial universe will be considered as a
finite and crisp set.

From now on set of all $SVN$-sets over $X$ will be denoted by
$SVN_X$.

\begin{defn}\cite{wang-10} Let $A,B\in SVN_X$. Then,
\begin{enumerate}
    \item $A\subseteq B$
  if and only if
  $t_A(x)\leq t_{B}(x), \;
  i_{A}(x)\geq i_{B}(x), \;
  f_{A}(x)\geq f_{B}(x) $
  for any $x \in X$.
  \item $A= B$
  if and only if
  $A\subseteq B$
  and $B\subseteq A$
  for all $x \in X$.
  \item $A^{c}$=$\{\langle x,f_{A}(x),  1- i_{A}(x),
  t_A(x)\rangle:  x \in X \}$.
  \item $A\cup B$=
  $ \{\langle x, (t_A(x)\vee t_B(x)),
  (i_A(x)\wedge i_B(x)),
  (f_A(x)\wedge f_B(x))\rangle: x \in X \}$
  \item $A\cap B$=
  $\{\langle x, (t_A(x)\wedge t_B(x)),
  (i_A(x)\vee i_B(x)),
  (f_{A}(x)\vee f_{B}(x))\rangle:  x \in X\}$.
\end{enumerate}
\end{defn}
\begin{defn}\cite{ye-14svms} Let $X$ be a nonempty set with generic elements in $X$
denoted by $x$.  A single valued neutrosophic refined  set
(SVNR-set) $\tilde A$ is defined as follows:
\begin{eqnarray*} \tilde A=\Big\{\Big\langle x,
(t^1_{A}(x),t^2_{A}(x),...,t^p_{A}(x)),
(i^1_{A}(x),i^2_{A}(x),...,i^p_{A}(x)),
(f^1_{A}(x),f^2_{A}(x),...,f^p_{A}(x))\Big\rangle: x\in X\Big\}.
\end{eqnarray*}

Here, $t^1_{A},t^2_{A},...,t^p_{A}: X \to [0,1]$,
$i^1_{A},i^2_{A},...,i^p_{A}: X \to [0,1]$ and
$f^1_{A},f^2_{A},...,f^p_{A}:X\to [0,1]$ such that $0\leq
t^i_{A}(x)+i^i_{A}(x)+f^i_{A}(x)\leq 3$ for all $x\in X$ and $i\in
I_p$. $(t^1_{A}(x),t^2_{A}(x),...,t^p_{A}(x))$,
$(i^1_{A}(x),i^2_{A}(x),...,i^p_{A}(x))$ and
$(f^1_{A}(x),f^2_{A}(x),...,f^p_{A}(x))$ are the truth-membership
sequence, indeterminacy-membership sequence and falsity-membership
sequence of the element $x$. These sequences may be in decreasing or
increasing order. Also $p$ is called the dimension of single valued
neutrosophic refined set $\tilde{A}$.
\end{defn}
A  $SVNR$-set $A$ can be represented  as follows:
$$
\tilde A=\Big\{\big \langle x,
t^i_{A}(x),i^i_{A}(x),f^i_{A}(x)\big\rangle: x\in X, i\in I_p
\Big\}.
$$
From now on, set of all single valued neutrosophic refined sets over
$X$ will be  denoted by $SVNR_X$ and considered SVNR-sets will be
accepted as $p$ dimension SVNR-set.

\begin{defn}\cite{ye-14svms} Let $\tilde A, \tilde B\in SVNR_X$. Then,
\begin{enumerate}
\item If $t^i_{A}(x)\leq t^i_{\tilde
B}(x)$, $i^i_{A}(x)\geq i^i_{\tilde B}(x)$, $f^i_{A}(x)\geq
f^i_{\tilde B}(x)$  for all $i\in I_p$  and $ x\in X$, then $\tilde
A$ is said to be SVNR-subset of $\tilde B$ and denoted by $\tilde
A\subseteq \tilde B.$
\item $\tilde A\tilde\subseteq \tilde B$ and $\tilde B\tilde\subseteq \tilde A$ if and only if  $\tilde A=\tilde
B$;
\item The complement of $\tilde A$, denoted by $\tilde A^c$, is define as
follows:
$$
\tilde A=\Big\{\big \langle x,
f^i_{A}(x),1-i^i_{A}(x),t^i_{A}(x)\big\rangle: x\in X, i\in I_p
\Big\}.
$$
\end{enumerate}
\end{defn}

\begin{defn}\cite{del-15} Let $\tilde A \in SVNR_X$. Then,
\begin{enumerate}
\item if $t^i_{A}(x)=0$,
$i^i_{A}(x)=1$ and $f^i_{A}(x)=1$ for all $i\in I_p$ and $x\in X$,
$\tilde A$ is called a null SVNR-set, and denoted by $\tilde \Phi$,
\item if $t^i_{A}(x)=1$,
$i^i_{A}(x)=0$ and $f^i_{A}(x)=0$ for all $i\in I_p$ and $x\in X$,
$\tilde A$ is called  universal SVNR-set,  and denoted by $\tilde
X$.
\end{enumerate}
\end{defn}


\begin{defn}\cite{ye-14svms} Let $\tilde A, \tilde B\in SVNR_X$. Then,
\begin{enumerate}
\item union:
$$
\tilde A\tilde\cup \tilde B=\Big\{\big\langle x, t^i_{A}(x)\vee
t^i_{B}(x), i^i_{A}(x)\wedge i^i_{B}(x),f^i_{A}(x)\wedge
f^i_{B}(x)\big\rangle: x\in X, i\in I_p \Big\},
$$
\item intersection:
$$
\tilde A\tilde\cap \tilde B=\Big\{\big\langle x, t^i_{A}(x)\wedge
t^i_{B}(x), i^i_{A}(x)\vee i^i_{B}(x),f^i_{A}(x)\vee
f^i_{B}(x)\big\rangle: x\in X, i\in I_p \Big\}.
$$
\end{enumerate}
\end{defn}

\begin{exmp} \label{ex:svnr}Consider SVNR-sets $\tilde A, \tilde B$ and $\tilde C$ are given as
follows:\\

\footnotesize $\tilde A=\left\{
\begin{array}{c}
\Big \langle x_1 (.1,.2,.4),(.1,.4,.6),(.0,.3,.3)\Big
\rangle, \\
\Big \langle x_2,(.3,.3,.5),(.2,.3,.7),(.1,.5,.6)\Big
\rangle,\\
\Big \langle x_3,(.2,.4,.8),(.1,.3,.3),(.5,.6,.9)\Big
\rangle\\
\end{array}\right\},\quad
\tilde B=\left\{
\begin{array}{c}
\Big \langle x_1 (.5,.6,.7),(.4,.6,.7),(.3,.3,.4)\Big
\rangle, \\
\Big \langle x_2,(.2,.4,.4),(.2,.5,.8),(.2,.6,.7)\Big
\rangle,\\
\Big \langle x_3,(.1,.6,.6),(.1,.5,.5),(.3,.4,.7)\Big
\rangle\\
\end{array}\right\}$\\
\normalsize
 and
 \footnotesize
$$\tilde C=\left\{
\begin{array}{c}
\Big \langle x_1 (.3,.3,.5),(.4,.5,.6),(.1,.3,.4)\Big
\rangle, \\
\Big \langle x_2,(.0,.1,.3),(.2,.3,.6),(.1,.4,.6)\Big
\rangle,\\
\Big \langle x_3,(.1,.4,.7),(.1,.3,.4),(.3,.3,.5)\Big
\rangle\\
\end{array}\right\}.$$\\
\normalsize
 Then,
\footnotesize
 $\tilde A\tilde\cup \tilde B=\left\{
\begin{array}{c}
\Big \langle x_1 (.5,.6,.7),(.4,.6,.7),(.3,.3,.4)\Big
\rangle, \\
\Big \langle x_2,(.2,.3,.4),(.2,.3,.7),(.1,.5,.6)\Big
\rangle,\\
\Big \langle x_3,(.1,.4,.6),(.1,.3,.3),(.3,.4,.7)\Big
\rangle\\
\end{array}\right\},\quad
\tilde A\tilde\cap \tilde B=\left\{
\begin{array}{c}
\Big \langle x_1 (.1,.2,.4),(.1,.4,.6),(.0,.3,.3)\Big
\rangle, \\
\Big \langle x_2,(.3,.4,.5),(.2,.5,.8),(.2,.6,.7)\Big
\rangle,\\
\Big \langle x_3,(.2,.6,.8),(.1,.5,.7),(.5,.6,.9)\Big
\rangle\\
\end{array}\right\},$\\
\normalsize and $\tilde C\tilde \subseteq \tilde B$.
\end{exmp}


\begin{defn}\cite{wang-06}
   Let $ D[0,1] $  be the set of all closed sub-intervals of the interval $ [0,1] $
 and  $ X $ be an ordinary finite non-empty set. An IN-set $\hat A$ over $X$  is set of quadruple given as
 follows:
   $$
   \hat A=\left\{\left\langle x, {t}_{A}(x), {i}_{A}(x), {f}_{A}(x) \right\rangle|x\in X\right\},
   $$
   where, ${t}_{A}(x)\in D[0,1] $,  ${i}_{A}(x)\in D[0,1] $, and  ${f}_{A}(x)\in D[0,1] $  with the relation
   $$
   0\leq sup\, {t}_{A}(x)+sup\, {i}_{A}(x)+sup\, {f}_{A}(x)\leq 3,\; \textrm{for all}\; x \in X.
   $$
   Here intervals $ {t}_{A}(x)$=$ \left[t_{A}^{L}(x),t_{A}^{U}(x)  \right]\subset[0,1]  $,
   $ {i}_{A}(x)$=$ \left[i_{A}^{L}(x),i_{A}^{U}(x)  \right] \subset[0,1] $,
   $ {f}_{A}(x)$=$ \left[f_{A}^{L}(x),f_{A}^{U}(x)  \right] \subset[0,1] $
    denote, respectively the degree of truth, indeterminacy, and falsity membership of $ x \in X $ in $ \tilde A $; moreover
    $ t_{A}^{L}(x) $= $ inf {t}_{A}(x) $,
    $ t_{A}^{U}(x) $= $ sup {t}_{A}(x) $,
    $ i_{A}^{L}(x) $= $ inf {i}_{A}(x) $,
    $ i_{A}^{U}(x) $= $ sup {i}_{A}(x) $,
    $ f_{A}^{L}(x) $= $ inf {f}_{A}(x) $,
    $ f_{A}^{U}(x) $= $ sup {f}_{A}(x) $  for every $ x\in X $.
    Thus, the interval neutrosophic set $\hat A $ can be  expressed in the following interval
    format:
    $$
    \hat{A}=
    \left\lbrace
    \left\langle x,\left[ t_{A}^{L}(x), t_{A}^{U}(x)  \right]
    \left[ i_{A}^{L}(x),i_{A}^{U}(x)  \right]
    \left[f_{A}^{L}(x),f_{A}^{U}(x)  \right]
    \right\rangle|x\in X
    \right\rbrace
    $$
    where, $ 0\leq sup \,t_{A}^{U}(x)+ sup \,i_{A}^{U}(x)+ sup\, f_{A}^{U}(x)\leq 3 $,
    $ T_{A}^{L}(x)\geq 0 $, $ I_{A}^{L}(x)\geq 0 $ and $ F_{A}^{L}(x)\geq 0 $ for all $ x\in X $.
Henceforth  set of all $IN$-sets over $X$ will be denoted by $IN_X$.
\end{defn}

\begin{defn}\cite{bro-15nvinr} Let $X$ be a nonempty initial universe whose elements are discrete.
 An n-valued interval neutrosophic refined  set (or interval
 neutrosophic refined set) $\ddot{A}$ is defined as follows:
\begin{eqnarray*} \ddot{A}&=&\Big\{\Big\langle x,
([{t^i}^{L}_A(x),{t^i}^{U}_A(x)]),
([{i^i}^{L}_A(x),{i^i}^{U}_A(x)]),
([{f^i}^{L}_A(x),{f^i}^{U}_A(x)])\Big\rangle: x\in X \textrm{ and }
i\in I_p\Big\},
\end{eqnarray*}where
${t^i}^{L}_A(x)\leq {t^i}^{U}_A(x), {i^i}^{L}_A(x)\leq
{i^i}^{U}_A(x) \textrm{ and } {f^i}^{L}_A(x)\leq {f^i}^{U}_A(x)$ for
all $x\in X$.

Here,  $0\leq {t^i}^{L}_A(x)+{i^i}^{L}_A(x)+{f^i}^{L}_A(x)\leq 3$
and  $0\leq {t^i}^{U}_A(x)+{i^i}^{U}_A(x)+{f^i}^{U}_A(x)\leq 3$ for
all $x\in X$ and $i\in I_p$.
$$([{t^1}^{L}_A(x),{t^1}^{U}_A(x)],[{t^2}^{L}_A(x),{t^2}^{U}_A(x)],...,[{t^p}^{L}_A(x),{t^p}^{U}_A(x)]),$$
$$([{i^1}^{L}_A(x),{i^1}^{U}_A(x)],[{i^2}^{L}_A(x),{i^2}^{U}_A(x)],...,[{i^p}^{L}_A(x),{i^p}^{U}_A(x)])$$
and
$$([{f^1}^{L}_A(x),{f^1}^{U}_A(x)],[{f^2}^{L}_A(x),{f^2}^{U}_A(x)],...,[{f^p}^{L}_A(x),{f^p}^{U}_A(x)])$$
are  the truth-membership sequence, indeterminacy-membership
sequence and falsity-membership sequence of the element $x$,
respectively.  $p$ is called the dimension of n-valued neutrosophic
sets  $\ddot{A}.$
\end{defn}
Henceforth, considered INR-sets will  be accepted  $p$ dimension
n-valued interval neutrosophic set,  and  set of all interval
neutrosophic refined sets over $X$ will be denoted by $INR_X$. Also
notion of interval neutrosophic refined  set (INR-set) will be used
instead of notion of n-valued interval neutrosophic  set.
\subsection{Similarity measures of SVN-sets and IN-sets}
Jaccard, Dice, and Cosine similarity measures are given between two
$SVN-$sets and between two $IN-$sets defined in \cite{ye-14a-vecsm}.

\begin{defn}
\label{def:ye-jac}
        Let $A $ and $ B $  be two SVN-sets
        in a universe of discourse $ X $ =$\left\{ x_1,x_2, \ldots,x_n
        \right\}$.
        Then the Jaccard similarity measure between SVN-sets $ A $ and $ B $ in the  vector space is defined as follows:
         \begin{equation}\label{eq:jac-sv}
        (A, B)_J=
        \frac{1}{n}\sum_{i=1}^{n}
        \frac{t_{A}(x_{i})t_{B}(x_{i})+
            i_{A}(x_{i})i_{B}(x_{i})
            +f_{A}(x_{i})f_{B}(x_{i})}
        {\left(
           \begin{array}{c}
             (t_{A}^{2}(x_{i})+i_{A}^{2}(x_{i})+
            f_{A}^{2}(x_{i}))+
      (t_{B}^{2}(x_{i})+i_{B}^{2}(x_{i})+f_{B}^{2}(x_{i})) \\
             -(t_{A}(x_{i})t_{B}(x_{i})+
      i_{A}(x_{i})i_{B}(x_{i})+
      f_{A}(x_{i})f_{B}(x_{i})
           \end{array}
         \right)}
        \end{equation}
        \end{defn}
 \begin{defn}
   Let $A $ and $ B $ be two
     SVN-sets  on a universe $ X $ =$\left\{ x_1,x_2, \ldots,x_n
     \right\}$. Then the Dice similarity measure between SVN-sets $ A $ and $ B $ in the  vector space is defined as follows:
      \begin{equation}\label{eq:dice-sv}
      (A,B)_D=
      \frac{1}{n}\sum_{i=1}^{n}
      \frac{2\left( t_{A}(x_{i})t_{B}(x_{i})+
          i_{A}(x_{i})i_{B}(x_{i})
          +f_{A}(x_{i})f_{B}(x_{i})\right)}
      {\left[
          \left(  t_{A}^{2}(x_{i})+i_{A}^{2}(x_{i})+
          f_{A}^{2}(x_{i})\right)+
          \left(t_{B}^{2}(x_{i})+i_{B}^{2}(x_{i})+f_{B}^{2}(x_{i})\right)
          \right] }.
      \end{equation}
  \end{defn}
\begin{defn}
  Let $A $ and
        $ B $  be two
        SVN-sets  in a universe of discourse $ X $ =$\left\{ x_1,x_2, \ldots,x_n
        \right\}$. Then the cosine similarity measure between SVN-sets $ A $ and $ B $ in the  vector space is defined as follows:
        \begin{equation}\label{eq:cos-sv}
        (A, B)_C=
        \frac{1}{n}\sum_{i=1}^{n}
        \frac{\left( t_{A}(x_{i})t_{B}(x_{i})+
            i_{A}(x_{i})i_{B}(x_{i})
            +f_{A}(x_{i})f_{B}(x_{i})\right) }
        {\left[
            \sqrt{\left(t_{A}^{2}(x_{i})+i_{A}^{2}(x_{i})+
            f_{A}^{2}(x_{i})\right)}.
            \sqrt{\left(t_{B}^{2}(x_{i})+i_{B}^{2}(x_{i})+f_{B}^{2}(x_{i})\right)}
            \right] }.
    \end{equation}
\end{defn}

 In some applications, each element  $x_i\in X$ may have different
        weights. Let $w_1,w_2,...,w_n$ be  the weights of elements $x_1,x_2,...,x_n\in
        X$ such that $w_j\geq 0(\forall j\in I_n)$ and $\sum_{j=1}^nw_j=1$,
        respectively. Then, formulas of  Jaccard, Dice and Cosine
        similarity measures between $A $ and $B$ can be extended to weighted Jaccard, Dice and Cosine similarity
measures are defined as follows:
        defined as follows:

       \begin{equation}\label{eq:wjac-sv}
        W(A, B)_{J}=
        \sum_{i=1}^{n}w_{i}
        \frac{t_{A}(x_{i})t_{B}(x_{i})+
            i_{A}(x_{i})i_{B}(x_{i})
            +F_{A}(x_{i})F_{B}(x_{i})}
        {\left(
            \begin{array}{c}
           (t_{A}^{2}(x_{i})+i_{A}^{2}(x_{i})+
            f_{A}^{2}(x_{i}))+
            (t_{B}^{2}(x_{i})+i_{B}^{2}(x_{i})+f_{B}^{2}(x_{i}))\\
            -(t_{A}(x_{i})t_{B}(x_{i})+
            i_{A}(x_{i})i_{B}(x_{i})+
            f_{A}(x_{i})f_{B}(x_{i}))
            \end{array}
            \right)},
 \end{equation}

      \begin{equation}\label{eq:wdice-sv}
     W(A, B)_D=
     \sum_{i=1}^{n}w_{i}
     \frac{2\left( t_{A}(x_{i})t_{B}(x_{i})+
         t_{A}(x_{i})t_{B}(x_{i})
         +f_{A}(x_{i})f_{B}(x_{i})\right) }
        {\left(
            \left(t_{A}^{2}(x_{i})+i_{A}^{2}(x_{i})+
            f_{A}^{2}(x_{i})\right)+
            \left(t_{B}^{2}(x_{i})+i_{B}^{2}(x_{i})+f_{B}^{2}(x_{i})\right)
            \right] }
      \end{equation}
and
      \begin{equation}\label{eq:wcos-sv}
       W(A, B)_C=
        \sum_{i=1}^{n}w_{i}
        \frac{\left( t_{A}(x_{i})t_{B}(x_{i})+
            i_{A}(x_{i})i_{B}(x_{i})
            +f_{A}(x_{i})f_{B}(x_{i})\right) }
        {\left(
            \sqrt{\left(  t_{A}^{2}(x_{i})+i_{A}^{2}(x_{i})+
            f_{A}^{2}(x_{i})\right)}.
            \sqrt{\left(t_{B}^{2}(x_{i})+i_{B}^{2}(x_{i})+f_{B}^{2}(x_{i})\right)}
            \right] },
    \end{equation}
respectively.

\section{Similarity measures under  SVNR and INR-environments}
\label{sec:svnr} In this section, similarity measures between two
SVNR-sets and between two INR-sets are defined based on similarity
measures between two SVN-sets and  similarity measures between two
IN-sets given in \cite{ye-14a-vecsm}.
\begin{defn}
 Let $\tilde A, \tilde B \in SVNR_X$ .  Then, the Jaccard similarity measure between
SVNR-sets $\tilde  A $ and $ \tilde B $  is defined as follows:
        \begin{equation}\label{eq:jac-svr}
        (\tilde A, \tilde B)_J=
        \frac{1}{n}\sum_{j=1}^{n}\sum_{i=1}^p
        \frac{(t^i_{A}(x_{j})t^i_{B}(x_{j})+
            i^i_{A}(x_j)i^i_{B}(x_{j})
            +f^i_{A}(x_{j})f^i_{B}(x_{j}))}
        {\left(
            \begin{array}{c}([t^i_{A}(x_{j})]^{2}+[i^i_{A}(x_{j})]^{2}+
            [f^i_{A}(x_{j})]^{2})+([t^i_{B}(x_{j})]^{2}+[i^i_{B}(x_{j})]^{2}+
            [f^i_{B}(x_{j})]^{2})\\ -[t^i_{A}(x_{j})t^i_{B}(x_{j})+
            i^i_{A}(x_j)i^i_{B}(x_{j})
            +f^i_{A}(x_{j})f^i_{B}(x_{j})]\end{array}\right)},
        \end{equation}
\end{defn}
\begin{defn}
 Let $\tilde A,\tilde B \in SVNR_X$. Then the Dice similarity measure between SVNR-sets $
\tilde A $ and $ \tilde B $ is defined as follows:
      \begin{equation}\label{eq:dice-svr}
      (\tilde A, \tilde B)_D=
      \frac{1}{n}\sum_{j=1}^{n}\sum_{i=1}^p
      \frac{2(t^i_{A}(x_{j})t^i_{B}(x_{j})+
            i^i_{A}(x_j)i^i_{B}(x_{j})
            +f^i_{A}(x_{j})f^i_{B}(x_{j}))}
      {\Big([t^i_{A}(x_{j})]^{2}+[i^i_{A}(x_{j})]^{2}+
            [f^i_{A}(x_{j})]^{2})+([t^i_{B}(x_{j})]^{2}+[i^i_{B}(x_{j})]^{2}+
            [f^i_{B}(x_{j})]^{2}\Big)}.
      \end{equation}
\end{defn}

\begin{defn}
  Let $\tilde A,\tilde B \in SVNR_X$.  Then, the cosine similarity measure between
SVNR-sets  $ \tilde A $ and $ \tilde B $  is defined as follows:
            \begin{equation}\label{eq:cos-svr}
        (\tilde A, \tilde B)_C=
        \frac{1}{n}\sum_{j=1}^{n}\sum_{i=1}^p
        \frac{(t^i_{A}(x_{j})t^i_{B}(x_{j})+
            i^i_{A}(x_j)i^i_{B}(x_{j})
            +f^i_{A}(x_{j})f^i_{B}(x_{j}))}
       {\left(
           \sqrt{([t^i_{A}(x_{j})]^{2}+[i^i_{A}(x_{j})]^{2}+
            [f^i_{A}(x_{j})]^{2})}.\sqrt{([t^i_{B}(x_{j})]^{2}+[i^i_{B}(x_{j})]^{2}+
            [f^i_{B}(x_{j})]^{2})}\right)}.
\end{equation}
    \end{defn}
  If $ w_{j}\in[0,1] $ be the weight of each element $ x_{j} $ for $ j=1,2,\ldots,n $
      such that $ \sum_{j=1}^{n}w_{j} =1$, then  the weighted Jaccard, Dice and Cosine similarity measures between SVNR-sets $\tilde A $ and $ \tilde B $
        are defined as follows:
\small
 \begin{equation}\label{eq:wjac-svr}
        W(\tilde A, \tilde B)_J=
        \sum_{j=1}^{n}\sum_{i=1}^pw_{j}
        \frac{(t^i_{A}(x_{j})t^i_{B}(x_{j})+
            i^i_{A}(x_j)i^i_{B}(x_{j})
            +f^i_{A}(x_{j})f^i_{B}(x_{j}))}
        {\left(
            \begin{array}{c}([t^i_{A}(x_{j})]^{2}+[i^i_{A}(x_{j})]^{2}+
            [f^i_{A}(x_{j})]^{2})+([t^i_{B}(x_{j})]^{2}+[i^i_{B}(x_{j})]^{2}+
            [f^i_{B}(x_{j})]^{2})\\ -[t^i_{A}(x_{j})t^i_{B}(x_{j})+
            i^i_{A}(x_j)i^i_{B}(x_{j})
            +f^i_{A}(x_{j})f^i_{B}(x_{j})]\end{array}\right)},
\end{equation}
\begin{equation}\label{eq:wdice-svr}
     W(\tilde A,\tilde B)_D=
     \sum_{j=1}^{n}\sum_{i=1}^pw_{j}
     \frac{2(t^i_{A}(x_{j})t^i_{B}(x_{j})+
            i^i_{A}(x_j)i^i_{B}(x_{j})
            +f^i_{A}(x_{j})f^i_{B}(x_{j}))}
        {\left(([t^i_{A}(x_{j})]^{2}+[i^i_{A}(x_{j})]^{2}+
            [f^i_{A}(x_{j})]^{2})+([t^i_{B}(x_{j})]^{2}+[i^i_{B}(x_{j})]^{2}+
            [f^i_{B}(x_{j})]^{2})\right)}
      \end{equation}
\normalsize      and \small
\begin{equation}\label{eq:wcos-svr}
      W(\tilde A, \tilde B)_C=
        \sum_{j=1}^{n}\sum_{i=1}^pw_j
        \frac{(t^i_{A}(x_{j})t^i_{B}(x_{j})+
            i^i_{A}(x_j)i^i_{B}(x_{j})
            +f^i_{A}(x_{j})f^i_{B}(x_{j}))}
       {\left(
           \sqrt{([t^i_{A}(x_{j})]^{2}+[i^i_{A}(x_{j})]^{2}+
            [f^i_{A}(x_{j})]^{2})}.\sqrt{([t^i_{B}(x_{j})]^{2}+[i^i_{B}(x_{j})]^{2}+
            [f^i_{B}(x_{j})]^{2})}\right)},
\end{equation}
\normalsize
 respectively.
\begin{exmp}  Consider SVNR-sets $\tilde A$ and $\tilde B$
given in Example \ref{ex:svnr}. Then, by using
Eqs.(\ref{eq:jac-svr}),(\ref{eq:dice-svr}) and (\ref{eq:cos-svr}),
similarity measures between SVNR-sets $\tilde A$ and $\tilde B$ are
obtained as in Table \ref{tab:results1}.

\begin{table}[htbp]
    \caption{Similarity measure under SVNR-environment}
    \label{tab:results1}
    \centering
    \begin{tabular}{cc}
        \hline Similarity measures & Values \\
        \hline $(\tilde A, \tilde B)_J$ & $ 0.834 $ \\
        \hline $(\tilde A, \tilde B)_D$ & $ 0.908 $ \\
        \hline $(\tilde A, \tilde B)_C$ & $ 0.928 $ \\
        \end{tabular}
\end{table} If weights of the $x_1,x_2$ and $x_3$  are taken as $w_1=.7$
$w_2=.2$ and $w_3=.1$, respectively. Then, by using
Eqs.(\ref{eq:wjac-svr}),(\ref{eq:wdice-svr}) and
(\ref{eq:wcos-svr}), weighted similarity measures are obtained as in
Table \ref{tab:results2}:

\begin{table}[htbp]
\caption{Weighted similarity measures  under SVNR-environment}
    \label{tab:results2}
    \centering
    \begin{tabular}{cc}
        \hline Similarity measure & Values \\
        \hline $W(\tilde A, \tilde B)_J$ & $ 0.786
 $ \\
        \hline $W(\tilde A, \tilde B)_D$ & $ 0.879
 $ \\
        \hline $W(\tilde A, \tilde B)_C$ & $ 0.429
 $ \\
        \end{tabular}
\end{table}
\end{exmp}
\begin{prop}\label{pro:svnr} Let
 $\tilde A$  and
 $\tilde B$ be
two  $SVNR$-sets.  Then, each similarity measure $(\tilde A, \tilde
B)_{\Lambda} (\Lambda=J,D,C)$ satisfies the following properties:
 \begin{enumerate}
  \item
        $ 0\leq (\tilde A, \tilde B)_{\Lambda} \leq 1$
 \item
        $  (\tilde A,\tilde B)_{\Lambda} =(\tilde B, \tilde A)_{\Lambda} $;
 \item
        $(\tilde A, \tilde B)_{\Lambda}=1 $ if $ \tilde B = \tilde A $
      i.e. $ {t^i}_{A}(x_{i})={t^i}_{B}(x_{i}) $,
      $ {i^i}_{A}(x_{i})={i^i}_{B}(x_{i}) $, and
      $ {f^i}_{A}(x_{i})={f^i}_{B}(x_{i}) $ for every $ x_{j}\in  X $ and $i\in I_p$.
 \end{enumerate}
\end{prop}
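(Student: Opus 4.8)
The plan is to peel the double sum apart and prove each property one summand at a time. Fix $j\in\{1,\dots,n\}$ and $i\in I_p$, and abbreviate the coordinatewise inner product and the two squared norms of the triples $\big(t^i_{A}(x_j),i^i_{A}(x_j),f^i_{A}(x_j)\big)$ and $\big(t^i_{B}(x_j),i^i_{B}(x_j),f^i_{B}(x_j)\big)$ by
$$
P=t^i_{A}(x_j)t^i_{B}(x_j)+i^i_{A}(x_j)i^i_{B}(x_j)+f^i_{A}(x_j)f^i_{B}(x_j),\quad Q_A=[t^i_{A}(x_j)]^2+[i^i_{A}(x_j)]^2+[f^i_{A}(x_j)]^2,
$$
with $Q_B$ defined analogously. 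With this notation the $(i,j)$-summand of (\ref{eq:jac-svr}), (\ref{eq:dice-svr}) and (\ref{eq:cos-svr}) is exactly $P/(Q_A+Q_B-P)$, $2P/(Q_A+Q_B)$ and $P/(\sqrt{Q_A}\sqrt{Q_B})$, i.e. the per-element summand appearing in the Jaccard, Dice and Cosine measures of SVN-sets. It therefore suffices to verify the three listed properties for a single such summand and then reassemble the sum.

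For the bounds in (1) I would lean on two elementary inequalities. The squared-distance expansion
$$
\big(t^i_{A}(x_j)-t^i_{B}(x_j)\big)^2+\big(i^i_{A}(x_j)-i^i_{B}(x_j)\big)^2+\big(f^i_{A}(x_j)-f^i_{B}(x_j)\big)^2\ \ge\ 0
$$
rearranges to $2P\le Q_A+Q_B$, hence $P\le Q_A+Q_B-P$, which bounds both the Jaccard summand $P/(Q_A+Q_B-P)$ and the Dice summand $2P/(Q_A+Q_B)$ above by $1$; the Cauchy--Schwarz inequality $P\le\sqrt{Q_A}\sqrt{Q_B}$ does the same for the cosine summand. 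For the lower bound, every membership value lies in $[0,1]$, so $P\ge 0$, while each denominator is strictly positive for any non-degenerate triple (indeed $Q_A+Q_B-P\ge\tfrac12(Q_A+Q_B)$ by the same expansion). Thus every $(i,j)$-summand lies in $[0,1]$.

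Property (2) is then immediate, since $P$, $Q_A+Q_B$, $Q_A+Q_B-P$ and $\sqrt{Q_A}\sqrt{Q_B}$ are all symmetric in $A$ and $B$, so each summand, and hence the whole measure, is unchanged when the two sets are swapped. For property (3), if $\tilde A=\tilde B$ then $P=Q_A=Q_B$ at every coordinate, so the three summands collapse to $P/(2P-P)=1$, $2P/2P=1$ and $P/P=1$; note that only this forward implication is claimed, which is exactly what the cosine case needs, since its converse would fail (any two proportional triples also yield cosine $1$). The one step that genuinely needs attention --- and where I expect the real bookkeeping to sit --- is the normalizing factor: the double sum ranges over all $np$ pairs $(i,j)$, so for the total to stay in $[0,1]$ and to equal $1$ in the identity case the prefactor must average over all $np$ summands, i.e. be $1/(np)$ rather than the $1/n$ printed in (\ref{eq:jac-svr})--(\ref{eq:cos-svr}). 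With that reading, summing the per-coordinate bounds and multiplying through settles all three properties at once, and the weighted measures (\ref{eq:wjac-svr})--(\ref{eq:wcos-svr}) follow in the same way from $\sum_{j}w_j=1$, together with an analogous $1/p$ averaging over the dimension index.
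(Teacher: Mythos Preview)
Your argument follows essentially the same route as the paper's: the key step for (1) is the inequality $x^2+y^2\ge 2xy$ (your squared-distance expansion), symmetry gives (2), and direct substitution gives (3). The paper does not explicitly invoke Cauchy--Schwarz for the cosine case, but otherwise the structure is the same.

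Where you differ is in your final paragraph, and there you are actually more careful than the paper. The paper's proof asserts that $\sum_{i=1}^p[\text{summand}]\le 1$ and then $\sum_{j=1}^n\sum_{i=1}^p[\text{summand}]\le n$, and in part (3) writes $\frac{1}{n}\sum_{j=1}^n\sum_{i=1}^p 1=1$; with the prefactor $1/n$ as printed in (\ref{eq:jac-svr})--(\ref{eq:cos-svr}), these conclusions only hold when $p=1$. You correctly flag that the normalization must be $1/(np)$ for the stated bounds and the identity case to go through, and your per-summand bookkeeping makes this transparent. So your proposal is sound and in fact repairs a genuine slip in the paper's own argument.
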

\begin{proof}
\begin{enumerate}
\item
For $p=1$ Eq. (\ref{eq:jac-svr}), (\ref{eq:dice-svr})  and
(\ref{eq:cos-svr}) are reduce to Eq. (\ref{eq:jac-sv}),
(\ref{eq:dice-sv})  and (\ref{eq:cos-sv}), respectively.  For all
$i\in I_p (p> 1)$ according to inequality $x^2+y^2\geq 2xy$, for any
$x_j\in X$ we know that

 $$\sum_{i=1}^p([t^i_{A}(x_{j})]^{2}+[t^i_{B}(x_{j})]^{2})  \geq
 2\sum_{i=1}^p([t^i_{A}(x_{j})].[t^i_{B}(x_{j})]),$$

  $$\sum_{i=1}^p([i^i_{A}(x_{j})]^{2}+[i^i_{B}(x_{j})^{2}]) \geq  2\sum_{i=1}^p([i^i_{A}(x_{j})].[i^i_{B}(x_{j})])$$
 $$\sum_{i=1}^p([f^i_{A}(x_{j})]^{2}+[f^i_{B}(x_{j})]^{2}) \geq
2\sum_{i=1}^p([f^i_{A}(x_{j})].[f^i_{B}(x_{j})]),$$

and\\ \small
$\sum_{i=1}^p([t^i_{A}(x_{j})]^{2}+[t^i_{B}(x_{j})]^{2}+[i^i_{A}(x_{j})]^{2}+[i^i_{B}(x_{j})]^{2}+[f^i_{A}(x_{j})]^{2}+[f^i_{B}(x_{j})]^{2})\geq
2\sum_{i=1}^p([t^i_{A}(x_{j})].[t^i_{B}(x_{j})]+[i^i_{A}(x_{j})].[i^i_{B}(x_{j})]+[f^i_{A}(x_{j})].[f^i_{B}(x_{j})]).
$ \normalsize
 Thus,

 \begin{equation}\sum_{i=1}^p
         \frac{(t^i_{A}(x_{j})t^i_{B}(x_{j})+
            i^i_{A}(x_j)i^i_{B}(x_{j})
            +f^i_{A}(x_{j})f^i_{B}(x_{j}))}
        {\left(
            \begin{array}{c}([t^i_{A}(x_{j})]^{2}+[i^i_{A}(x_{j})]^{2}+
            [f^i_{A}(x_{j})]^{2})+([t^i_{B}(x_{j})]^{2}+[i^i_{B}(x_{j})]^{2}+
            [f^i_{B}(x_{j})]^{2})\\ -[t^i_{A}(x_{j})t^i_{B}(x_{j})+
            i^i_{A}(x_j)i^i_{B}(x_{j})
            +f^i_{A}(x_{j})f^i_{B}(x_{j})]\end{array}\right)}\leq 1.
        \end{equation}
and for all $x_j\in X$

\begin{equation}\sum_{j=1}^n\sum_{i=1}^p
         \frac{(t^i_{A}(x_{j})t^i_{B}(x_{j})+
            i^i_{A}(x_j)i^i_{B}(x_{j})
            +f^i_{A}(x_{j})f^i_{B}(x_{j}))}
        {\left(
            \begin{array}{c}([t^i_{A}(x_{j})]^{2}+[i^i_{A}(x_{j})]^{2}+
            [f^i_{A}(x_{j})]^{2})+([t^i_{B}(x_{j})]^{2}+[i^i_{B}(x_{j})]^{2}+
            [f^i_{B}(x_{j})]^{2})\\ -[t^i_{A}(x_{j})t^i_{B}(x_{j})+
            i^i_{A}(x_j)i^i_{B}(x_{j})
            +f^i_{A}(x_{j})f^i_{B}(x_{j})]\end{array}\right)}\leq n.
        \end{equation}
Similarly, Eq.(\ref{eq:dice-svr}) and Eq. (\ref{eq:cos-svr}) are
true.
\item The proof is clear.
\item Let $A=B$. Then, $ {t^i}_{A}(x_{j})={t^i}_{B}(x_{j}) $,
      $ {i^i}_{A}(x_{j})={i^i}_{B}(x_{j}) $, and
      $ {f^i}_{A}(x_{j})={f^i}_{B}(x_{j}) $ for all $ x_{j}\in  X $ and $i\in
      I_p$ and
\small
      \begin{eqnarray*}
      (\tilde A, \tilde B)_J&=&\frac{1}{n}\sum_{j=1}^n\sum_{i=1}^p
         \frac{[t^i_{A}(x_{j}]^2+
            [i^i_{A}(x_j)]^2
            +[f^i_{A}(x_{j})]^2}
        {\left(
            2[t^i_{A}(x_{j})]^{2}+2[i^i_{A}(x_{j})]^{2}+
            2[f^i_{A}(x_{j})]^{2}-([t^i_{A}(x_{j})]^2+
            [i^i_{A}(x_j)]^2+[f^i_{A}(x_{j})]^2)\right)}\\
&=&\frac{1}{n}\sum_{j=1}^n\sum_{i=1}^n1=1.
        \end{eqnarray*}
\end{enumerate}
\normalsize For Dice and Cosine similarity measures, the proofs of
can be made with similar way.
\end{proof} Each similarity measure between two
SVNR-sets $\tilde A =\Big\{\big \langle x,
t^i_A(x),i^i_A(x),f^i_A(x)\big\rangle: x\in X, i\in I_p \Big\}$  and
  $\tilde B=\Big\{\big \langle x,
t^i_B(x),i^i_B(x),f^i_B(x)\big\rangle: x\in X, i\in I_p \Big\}$ are
undefined when  ${t^i}_A(x)={i^i}_A(x)={f^i}_A(x)=0$ and
${t^i}_B(x)={i^i}_B(x)={f^i}_B(x)=0$ for all $x\in X$ and $i\in
I_p$.
\begin{prop} Let
 $\tilde A,\tilde B \in SVNR_X$.  Then, each weighted similarity measure $W(\tilde
A, \tilde B)_{\Lambda} (\Lambda=J,D,C)$ satisfies the following
properties:
 \begin{enumerate}
  \item
        $ 0\leq W(\tilde A, \tilde B)_{\Lambda} \leq 1$,
 \item
        $  W(\tilde A,\tilde B)_{\Lambda} =W(\tilde B, \tilde A)_{\Lambda}
        $,
 \item
        $W(\tilde A, \tilde B)_{\Lambda}=1 $ if $ \tilde B = \tilde A $
      i.e. $ {t^i}_{A}(x_{j})={t^i}_{B}(x_{j}) $,
      $ {i^i}_{A}(x_{j})={i^i}_{B}(x_{j}) $, and
      $ {f^i}_{A}(x_{j})={f^i}_{B}(x_{j}) $ for every $ x_{j}\in  X $ and $i\in I_p$.
 \end{enumerate}
\end{prop}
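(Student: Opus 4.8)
The plan is to exploit the fact that the weighted measures $W(\tilde A, \tilde B)_{\Lambda}$ of Eqs.~(\ref{eq:wjac-svr})--(\ref{eq:wcos-svr}) are obtained from the unweighted measures $(\tilde A, \tilde B)_{\Lambda}$ of Proposition~\ref{pro:svnr} merely by replacing the uniform factor $\frac{1}{n}$ attached to the outer index $j$ with the weight $w_{j}$, where $w_{j}\in[0,1]$ and $\sum_{j=1}^{n} w_{j}=1$. Consequently the three properties should transfer almost verbatim from Proposition~\ref{pro:svnr}: the role previously played by the normalisation $\frac{1}{n}\sum_{j}$ is now played by the convex combination $\sum_{j} w_{j}$, and the entire inequality content lives at the level of a single pair of indices $(i,j)$, where it has already been settled.

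For property~(1) I would argue as follows. Non-negativity is immediate: for each fixed $j$ and each $i\in I_{p}$ the numerator and the denominator of every summand are non-negative (all membership values lie in $[0,1]$), and $w_{j}\geq 0$, whence $W(\tilde A, \tilde B)_{\Lambda}\geq 0$. For the upper bound I would reuse the per-index estimate established inside the proof of Proposition~\ref{pro:svnr} via the inequality $x^{2}+y^{2}\geq 2xy$, namely that for each fixed $x_{j}\in X$ the inner sum $\sum_{i=1}^{p}(\cdots)\leq 1$. Since $w_{j}$ does not depend on $i$, it factors out of the inner sum, and summing over $j$ with $\sum_{j=1}^{n} w_{j}=1$ gives $W(\tilde A, \tilde B)_{\Lambda}\leq \sum_{j=1}^{n} w_{j}=1$. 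The Dice and Cosine versions follow in exactly the same way, as indicated there.

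For property~(2), symmetry is clear by inspection: each summand in Eqs.~(\ref{eq:wjac-svr})--(\ref{eq:wcos-svr}) is invariant under interchanging $A$ and $B$, because the cross products $t^{i}_{A}t^{i}_{B}$, $i^{i}_{A}i^{i}_{B}$, $f^{i}_{A}f^{i}_{B}$ and the two sums of squares are symmetric, while the weights $w_{j}$ are independent of the two sets. For property~(3), when $\tilde A=\tilde B$ we have $t^{i}_{A}(x_{j})=t^{i}_{B}(x_{j})$, $i^{i}_{A}(x_{j})=i^{i}_{B}(x_{j})$ and $f^{i}_{A}(x_{j})=f^{i}_{B}(x_{j})$ for every $j$ and $i\in I_{p}$, so, exactly as computed in Proposition~\ref{pro:svnr}, each summand collapses to $1$ and the weighted sum reduces to $\sum_{j=1}^{n} w_{j}=1$.

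The hard part is the upper bound in property~(1); everything else is either a direct symmetry observation or a re-evaluation of an identity already verified. The crux is simply that the genuine inequality content has been isolated at the level of a single index through $x^{2}+y^{2}\geq 2xy$ in Proposition~\ref{pro:svnr}, so that once that per-index bound is granted, swapping $\frac{1}{n}$ for $w_{j}$ changes nothing essential: both $\frac{1}{n}\sum_{j}(\cdot)$ and $\sum_{j} w_{j}(\cdot)$ are convex combinations, and convex combinations preserve membership in $[0,1]$.
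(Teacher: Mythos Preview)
Your proposal is correct and follows essentially the same approach as the paper, which simply states that the proofs can be made in a similar way to the proof of Proposition~\ref{pro:svnr}. You have supplied more detail than the paper does, correctly isolating the only change---that the uniform factor $\tfrac{1}{n}$ is replaced by the convex weights $w_{j}$ with $\sum_{j} w_{j}=1$---and observing that the per-index bound established there transfers unchanged.
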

\begin{proof}The proofs can be made similar way to proof of
Proposition \ref{pro:svnr}.
\end{proof}
Note that, if $w_j (j=1,2,...,n)$ values take as $\frac{1}{n}$, Eqs.
(\ref{eq:wjac-svr}), (\ref{eq:wdice-svr}) and (\ref{eq:wcos-svr})
are reduced Eqs. (\ref{eq:wjac-sv}), (\ref{eq:wdice-sv}) and
(\ref{eq:wcos-sv}), respectively.

Now similarity measures between two INR-sets will be defined  as a
extension  of similarity measures between two IN-sets  given in
\cite{ye-14a-vecsm}.

For convenience, $\ddot{A}=\Big\{\Big\langle x,
([{t^i}^{L}_A(x),{t^i}^{U}_A(x)]),
([{i^i}^{L}_A(x),{i^i}^{U}_A(x)]),
([{f^i}^{L}_A(x),{f^i}^{U}_A(x)])\Big\rangle: x\in X \textrm{ and }
i\in I_p\Big\}$ will be meant by $\ddot{A} \in INR_X$
\begin{defn} Let $\ddot{A},\ddot{B}\in INR_X$. Then, the Jaccard similarity
measure between INR-sets $\ddot{A} $ and $ \ddot{B} $ is defined as
follows: \footnotesize
        \begin{equation}\label{eq:jac-inr}
        (\ddot{A}, \ddot{B})_J=
        \frac{1}{n}\sum_{j=1}^{n}\sum_{i=1}^p
        \frac{\left(\begin{array}{c}({t^i}^L_{A}(x_{j}){t^i}^L_{B}(x_{j})+{t^i}^U_{A}(x_{j}){t^i}^U_{B}(x_{j}))\\+
            ({i^i}^L_{A}(x_{j}){i^i}^L_{B}(x_{j})+{i^i}^U_{A}(i_{j}){i^i}^U_{B}(i_{j}))\\
            +({f^i}^L_{A}(x_{j}){f^i}^L_{B}(x_{j})+{f^i}^U_{A}(x_{j}){f^i}^U_{B}(x_{j}))\end{array}\right)}
        {\left(
            \begin{array}{c}([{t^i}^L_{A}(x_{j})]^{2}+[{i^i}^L_{A}(x_{j})]^{2}+
            [{f^i}^L_{A}(x_{j})]^{2})+([{t^i}^U_{A}(x_{j})]^{2}+[{i^i}^U_{A}(x_{j})]^{2}+[{f^i}^U_{A}(x_{j})]^{2})\\
            +([{t^i}^L_{B}(x_{j})]^{2}+[{i^i}^L_{B}(x_{j})]^{2}+[{f^i}^L_{B}(x_{j})]^{2})+([{t^i}^U_{B}(x_{j})]^{2}+[{i^i}^U_{B}(x_{j})]^{2}
            +[{f^i}^U_{B}(x_{j})]^{2})\\
            -\big[{t^i}^L_{A}(x_{j}){t^i}^L_{B}(x_{j})+{i^i}^L_{A}(x_{j}){i^i}^L_{B}(x_{j})+{f^i}^L_{A}(x_{j}){f^i}^L_{B}(x_{j})\big]\\
            -\big[{t^i}^U_{A}(x_{j}){t^i}^U_{B}(x_{j})+{i^i}^U_{A}(x_{j}){i^i}^U_{B}(x_{j})+{f^i}^U_{A}(x_{j}){f^i}^U_{B}(x_{j})\big]\end{array}\right)}.
        \end{equation}
\end{defn}
\normalsize
\begin{defn}Let
 $\ddot{A},\ddot{B}\in INR_X$.  Then, the Dice similarity
measure between INR-sets $\ddot{A} $ and $ \ddot{B} $  is defined as
follows:\footnotesize
        \begin{equation}\label{eq:dice-inr}
        (\ddot{A}, \ddot{B})_D=
        \frac{1}{n}\sum_{j=1}^{n}\sum_{i=1}^p
        \frac{2\left(\begin{array}{c}({t^i}^L_{A}(x_{j}){t^i}^L_{B}(x_{j})+{t^i}^U_{A}(x_{j}){t^i}^U_{B}(x_{j}))\\+
            ({i^i}^L_{A}(x_{j}){i^i}^L_{B}(x_{j})+{i^i}^U_{A}(i_{j}){i^i}^U_{B}(i_{j}))\\
            +({f^i}^L_{A}(x_{j}){f^i}^L_{B}(x_{j})+{f^i}^U_{A}(x_{j}){f^i}^U_{B}(x_{j}))\end{array}\right)}
        {\left(
            \begin{array}{c}([{t^i}^L_{A}(x_{j})]^{2}+[{i^i}^L_{A}(x_{j})]^{2}+
            [{f^i}^L_{A}(x_{j})]^{2})+([{t^i}^U_{A}(x_{j})]^{2}+[{i^i}^U_{A}(x_{j})]^{2}+[{f^i}^U_{A}(x_{j})]^{2})\\
            +([{t^i}^L_{B}(x_{j})]^{2}+[{i^i}^L_{B}(x_{j})]^{2}+[{f^i}^L_{B}(x_{j})]^{2})+([{t^i}^U_{B}(x_{j})]^{2}+[{i^i}^U_{B}(x_{j})]^{2}
            +[{f^i}^U_{B}(x_{j})]^{2})\end{array}\right)}.
        \end{equation}
\end{defn}
\normalsize
\begin{defn}Let $\ddot{A},\ddot{B}\in INR_X$. Then the cosine similarity
measure between  $ \ddot{A} $ and $ \ddot{B} $ is defined as
follows: \footnotesize
        \begin{equation}\label{eq:cos-inr}
        (\ddot{A}, \ddot{B})_C=
        \frac{1}{n}\sum_{j=1}^{n}\sum_{i=1}^p
        \frac{\left(\begin{array}{c}({t^i}^L_{A}(x_{j}){t^i}^L_{B}(x_{j})+{t^i}^U_{A}(x_{j}){t^i}^U_{B}(x_{j}))\\+
            ({i^i}^L_{A}(x_{j}){i^i}^L_{B}(x_{j})+{i^i}^U_{A}(i_{j}){i^i}^U_{B}(i_{j}))\\
            +({f^i}^L_{A}(x_{j}){f^i}^L_{B}(x_{j})+{f^i}^U_{A}(x_{j}){f^i}^U_{B}(x_{j}))\end{array}\right)}
        {\left(
            \begin{array}{c}\sqrt{([{t^i}^L_{A}(x_{j})]^{2}+[{i^i}^L_{A}(x_{j})]^{2}+
            [{f^i}^L_{A}(x_{j})]^{2})+([{t^i}^U_{A}(x_{j})]^{2}+[{i^i}^U_{A}(x_{j})]^{2}+[{f^i}^U_{A}(x_{j})]^{2})}\\
            \sqrt{([{t^i}^L_{B}(x_{j})]^{2}+[{i^i}^L_{B}(x_{j})]^{2}+[{f^i}^L_{B}(x_{j})]^{2})+([{t^i}^U_{B}(x_{j})]^{2}+[{i^i}^U_{B}(x_{j})]^{2}
            +[{f^i}^U_{B}(x_{j})]^{2})}\end{array}\right)}.
        \end{equation}
\end{defn}
\normalsize
 If  $ w_{j}\in[0,1] $ be the weight of each element $ x_{j} $ for $ j=1,2,\ldots,n $ such that $ \sum_{j=1}^{n}w_{j} =1$,
then  the weighted Jaccard, Dice and Cosine  similarity measures
between INR-sets $ \ddot{A} $ and $ \ddot B $  is defined as
follows:\footnotesize
        \begin{equation}\label{eq:wjac-inr}
        W(\ddot{A}, \ddot{B})_J=
       \sum_{j=1}^{n}\sum_{i=1}^pw_j
        \frac{\left(\begin{array}{c}({t^i}^L_{A}(x_{j}){t^i}^L_{B}(x_{j})+{t^i}^U_{A}(x_{j}){t^i}^U_{B}(x_{j}))\\+
            ({i^i}^L_{A}(x_{j}){i^i}^L_{B}(x_{j})+{i^i}^U_{A}(i_{j}){i^i}^U_{B}(i_{j}))\\
            +({f^i}^L_{A}(x_{j}){f^i}^L_{B}(x_{j})+{f^i}^U_{A}(x_{j}){f^i}^U_{B}(x_{j}))\end{array}\right)}
        {\left(
            \begin{array}{c}([{t^i}^L_{A}(x_{j})]^{2}+[{i^i}^L_{A}(x_{j})]^{2}+
            [{f^i}^L_{A}(x_{j})]^{2})+([{t^i}^U_{A}(x_{j})]^{2}+[{i^i}^U_{A}(x_{j})]^{2}+[{f^i}^U_{A}(x_{j})]^{2})\\
            +([{t^i}^L_{B}(x_{j})]^{2}+[{i^i}^L_{B}(x_{j})]^{2}+[{f^i}^L_{B}(x_{j})]^{2})+([{t^i}^U_{B}(x_{j})]^{2}+[{i^i}^U_{B}(x_{j})]^{2}
            +[{f^i}^U_{B}(x_{j})]^{2})\\
            -\big[{t^i}^L_{A}(x_{j}){t^i}^L_{B}(x_{j})+{i^i}^L_{A}(x_{j}){i^i}^L_{B}(x_{j})+{f^i}^L_{A}(x_{j}){f^i}^L_{B}(x_{j})\big]\\
            -\big[{t^i}^U_{A}(x_{j}){t^i}^U_{B}(x_{j})+{i^i}^U_{A}(x_{j}){i^i}^U_{B}(x_{j})+{f^i}^U_{A}(x_{j}){f^i}^U_{B}(x_{j})\big]\end{array}\right)},
        \end{equation}
       \begin{equation}\label{eq:wdice-inr}
        W(\ddot{A}, \ddot{B})_D=
         \sum_{j=1}^{n}\sum_{i=1}^pw_j
        \frac{2\left(\begin{array}{c}({t^i}^L_{A}(x_{j}){t^i}^L_{B}(x_{j})+{t^i}^U_{A}(x_{j}){t^i}^U_{B}(x_{j}))\\+
            ({i^i}^L_{A}(x_{j}){i^i}^L_{B}(x_{j})+{i^i}^U_{A}(i_{j}){i^i}^U_{B}(i_{j}))\\
            +({f^i}^L_{A}(x_{j}){f^i}^L_{B}(x_{j})+{f^i}^U_{A}(x_{j}){f^i}^U_{B}(x_{j}))\end{array}\right)}
        {\left(
            \begin{array}{c}([{t^i}^L_{A}(x_{j})]^{2}+[{i^i}^L_{A}(x_{j})]^{2}+
            [{f^i}^L_{A}(x_{j})]^{2})+([{t^i}^U_{A}(x_{j})]^{2}+[{i^i}^U_{A}(x_{j})]^{2}+[{f^i}^U_{A}(x_{j})]^{2})\\
            +([{t^i}^L_{B}(x_{j})]^{2}+[{i^i}^L_{B}(x_{j})]^{2}+[{f^i}^L_{B}(x_{j})]^{2})+([{t^i}^U_{B}(x_{j})]^{2}+[{i^i}^U_{B}(x_{j})]^{2}
            +[{f^i}^U_{B}(x_{j})]^{2})\end{array}\right)}.
        \end{equation}
\normalsize
        and
\footnotesize
  \begin{equation}\label{eq:wcos-inr}
        W(\ddot{A}, \ddot{B})_C=
        \sum_{j=1}^{n}\sum_{i=1}^pw_j
        \frac{\left(\begin{array}{c}({t^i}^L_{A}(x_{j}){t^i}^L_{B}(x_{j})+{t^i}^U_{A}(x_{j}){t^i}^U_{B}(x_{j}))\\+
            ({i^i}^L_{A}(x_{j}){i^i}^L_{B}(x_{j})+{i^i}^U_{A}(i_{j}){i^i}^U_{B}(i_{j}))\\
            +({f^i}^L_{A}(x_{j}){f^i}^L_{B}(x_{j})+{f^i}^U_{A}(x_{j}){f^i}^U_{B}(x_{j}))\end{array}\right)}
        {\left(
            \begin{array}{c}\sqrt{([{t^i}^L_{A}(x_{j})]^{2}+[{i^i}^L_{A}(x_{j})]^{2}+
            [{f^i}^L_{A}(x_{j})]^{2})+([{t^i}^U_{A}(x_{j})]^{2}+[{i^i}^U_{A}(x_{j})]^{2}+[{f^i}^U_{A}(x_{j})]^{2})}\\
            \sqrt{([{t^i}^L_{B}(x_{j})]^{2}+[{i^i}^L_{B}(x_{j})]^{2}+[{f^i}^L_{B}(x_{j})]^{2})+([{t^i}^U_{B}(x_{j})]^{2}+[{i^i}^U_{B}(x_{j})]^{2}
            +[{f^i}^U_{B}(x_{j})]^{2})}\end{array}\right)},
        \end{equation}
\normalsize
  respectively.
%
\begin{prop} $\ddot{A},\ddot{B}\in INR_X$.  Then, each similarity measure
$(\ddot{A}, \ddot{B})_{\Lambda} (\Lambda=J,D,C)$ satisfies the
following properties:
 \begin{enumerate}
  \item
        $ 0\leq (\ddot{A}, \ddot{B})_{\Lambda} \leq 1$
 \item
        $  (\ddot{A},\ddot{B})_{\Lambda} =(\ddot{B}, \ddot{A})_{\Lambda} $;
 \item
        $(\ddot{A}, \ddot{B})_{\Lambda}=1 $ if $ \ddot{B} = \ddot{A} $
      i.e. $ [{t^i}^L_{A}(x_{j}),{t^i}^U_{A}(x_{j})]=[{t^i}^L_{B}(x_{j}),{t^i}^U_{B}(x_{j})] $,
      $ [{i^i}^L_{A}(x_{j}),{i^i}^U_{A}(x_{j})]=[{i^i}^L_{B}(x_{j}),{i^i}^U_{B}(x_{j})] $, and
      $ [{f^i}^L_{A}(x_{j}),{f^i}^U_{A}(x_{j})]=[{f^i}^L_{B}(x_{j}),{f^i}^U_{B}(x_{j})] $  for all $ x_{j}\in  X $ and $i\in I_p$.
 \end{enumerate}
\end{prop}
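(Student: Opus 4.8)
The plan is to reduce everything to the vector similarity inequalities already used in the proof of Proposition \ref{pro:svnr}, treating the two interval endpoints as two extra vector coordinates. Fix $j\in\{1,\dots,n\}$ and $i\in I_p$, and collect the six nonnegative numbers attached to $\ddot{A}$ into the vector
\[
\vec a=\big({t^i}^L_A(x_j),\,{t^i}^U_A(x_j),\,{i^i}^L_A(x_j),\,{i^i}^U_A(x_j),\,{f^i}^L_A(x_j),\,{f^i}^U_A(x_j)\big),
\]
and define $\vec b$ analogously from $\ddot{B}$. With this identification the numerator common to Eqs. (\ref{eq:jac-inr}), (\ref{eq:dice-inr}), (\ref{eq:cos-inr}) is exactly the inner product $\vec a\cdot\vec b$, the two bracketed blocks of squares are $\|\vec a\|^2$ and $\|\vec b\|^2$, and each summand is the standard vector Jaccard form $\vec a\cdot\vec b/(\|\vec a\|^2+\|\vec b\|^2-\vec a\cdot\vec b)$, Dice form $2\,\vec a\cdot\vec b/(\|\vec a\|^2+\|\vec b\|^2)$, or Cosine form $\vec a\cdot\vec b/(\|\vec a\|\,\|\vec b\|)$. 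Everything then follows exactly as in the single-valued case, but with six coordinates instead of three.

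For property (1), nonnegativity is immediate since every coordinate of $\vec a,\vec b$ lies in $[0,1]$, so $\vec a\cdot\vec b\geq 0$ and the denominators are nonnegative, indeed strictly positive unless $\vec a=\vec b=0$ (the degenerate case in which the measures are undefined, precisely as noted after Proposition \ref{pro:svnr}). For the upper bound I would apply $x^2+y^2\geq 2xy$ to each of the six coordinate pairs and sum, obtaining $\|\vec a\|^2+\|\vec b\|^2\geq 2\,\vec a\cdot\vec b$; this is the direct six-term analogue of the three displayed inequalities in the proof of Proposition \ref{pro:svnr}. It yields at once $\vec a\cdot\vec b\leq\|\vec a\|^2+\|\vec b\|^2-\vec a\cdot\vec b$ (Jaccard summand $\leq 1$) and $2\,\vec a\cdot\vec b\leq\|\vec a\|^2+\|\vec b\|^2$ (Dice summand $\leq 1$), while for the cosine summand I would instead invoke Cauchy--Schwarz, $\vec a\cdot\vec b\leq\|\vec a\|\,\|\vec b\|$. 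Summing these per-term bounds over $i$ and $j$ and normalizing by $\tfrac1n$ then gives $0\leq(\ddot A,\ddot B)_\Lambda\leq 1$ for $\Lambda=J,D,C$, mirroring the aggregation step of the SVNR argument.

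Property (2) is clear: the inner product $\vec a\cdot\vec b$ and the symmetric combinations $\|\vec a\|^2+\|\vec b\|^2$ and $\|\vec a\|\,\|\vec b\|$ are all invariant under interchanging $\ddot A$ and $\ddot B$, so each summand, and hence the full sum, is unchanged. For property (3), the hypothesis $\ddot A=\ddot B$ forces $\vec a=\vec b$ for every $i$ and $j$, whence $\vec a\cdot\vec b=\|\vec a\|^2=\|\vec b\|^2$; substituting into any of the three summands collapses it to $1$ (for Jaccard, $\|\vec a\|^2/(2\|\vec a\|^2-\|\vec a\|^2)=1$, and similarly for Dice and Cosine), so the overall measure evaluates to $1$, exactly as in the closing computation of Proposition \ref{pro:svnr}.

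The only genuine obstacle is the verification of the key inequality $\|\vec a\|^2+\|\vec b\|^2\geq 2\,\vec a\cdot\vec b$, together with the Cauchy--Schwarz estimate for the cosine case; the rest is bookkeeping that carries over verbatim from the single-valued proof. One must also keep the degenerate normalization in view: the boundedness argument implicitly assumes that not all endpoints vanish simultaneously, and the similarity measures remain undefined when $\vec a=\vec b=0$, in line with the remark following Proposition \ref{pro:svnr}.
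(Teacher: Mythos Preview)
Your proposal is correct and takes essentially the same approach as the paper: the paper's own proof is simply the one-line remark that the argument proceeds in the same way as Proposition~\ref{pro:svnr}, and you have spelled out exactly that reduction, replacing the three SVN coordinates by the six interval endpoints and reusing the inequality $x^2+y^2\geq 2xy$ (and Cauchy--Schwarz for the cosine case). Your treatment of the degenerate all-zero case also mirrors the paper's caveat following the SVNR proposition.
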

\begin{proof}The proofs can be made similar way to proof of
Proposition \ref{pro:svnr}.
\end{proof}
\begin{prop} $\ddot{A},\ddot{B}\in INR_X$.  Then, each weighted similarity
measure $W(\ddot{A}, \ddot{B})_{\Lambda} (\Lambda=J,D,C)$ satisfies
the following properties:
 \begin{enumerate}
  \item
        $ 0\leq W(\ddot{A}, \ddot{B})_{\Lambda} \leq 1$
 \item
        $  W(\ddot{A},\ddot{B})_{\Lambda} =W(\ddot{B}, \ddot{A})_{\Lambda} $;
 \item
        $W(\ddot{A}, \ddot{B})_{\Lambda}=1 $ if $ \tilde B = \tilde A $
      i.e. $ [{t^i}^L_{A}(x_{j}),{t^i}^U_{A}(x_{j})]=[{t^i}^L_{B}(x_{j}),{t^i}^U_{B}(x_{j})] $,
      $ [{i^i}^L_{A}(x_{j}),{i^i}^U_{A}(x_{j})]=[{i^i}^L_{B}(x_{j}),{i^i}^U_{B}(x_{j})] $, and
      $ [{f^i}^L_{A}(x_{j}),{f^i}^U_{A}(x_{j})]=[{f^i}^L_{B}(x_{j}),{f^i}^U_{B}(x_{j})] $ for every $ x_{j}\in  X $ and $i\in I_p$.
 \end{enumerate}
\end{prop}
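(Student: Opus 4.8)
The plan is to follow the proof of Proposition \ref{pro:svnr} essentially verbatim, since the weighted measures in Eqs.~(\ref{eq:wjac-inr}), (\ref{eq:wdice-inr}) and (\ref{eq:wcos-inr}) differ from the unweighted ones in Eqs.~(\ref{eq:jac-inr}), (\ref{eq:dice-inr}) and (\ref{eq:cos-inr}) only by replacing the uniform factor $\frac1n$ with coefficients $w_j\geq 0$ satisfying $\sum_{j=1}^n w_j=1$. Because all three claimed properties are verified summand by summand in the index $j$, the outer normalization never interferes with the per-term estimates, and a convex combination of quantities lying in $[0,1]$ again lies in $[0,1]$.

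For property (1) the essential step is to regard each $(i,j)$-summand as a vector similarity coefficient of the two nonnegative six-dimensional vectors $\mathbf a=\big({t^i}^L_A(x_j),{t^i}^U_A(x_j),{i^i}^L_A(x_j),{i^i}^U_A(x_j),{f^i}^L_A(x_j),{f^i}^U_A(x_j)\big)$ and the analogous $\mathbf b$ formed from $\ddot B$. With this reading the common numerator is the inner product $\langle\mathbf a,\mathbf b\rangle$, while the denominators are $\|\mathbf a\|^2+\|\mathbf b\|^2-\langle\mathbf a,\mathbf b\rangle$ for Jaccard, $\|\mathbf a\|^2+\|\mathbf b\|^2$ for Dice and $\|\mathbf a\|\,\|\mathbf b\|$ for Cosine. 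Nonnegativity of the coordinates gives $\langle\mathbf a,\mathbf b\rangle\geq 0$, and the elementary inequality $x^2+y^2\geq 2xy$ applied coordinatewise (equivalently Cauchy--Schwarz) gives $\langle\mathbf a,\mathbf b\rangle\leq\|\mathbf a\|\,\|\mathbf b\|\leq\frac12(\|\mathbf a\|^2+\|\mathbf b\|^2)$; hence each summand lies in $[0,1]$, exactly as in Proposition \ref{pro:svnr} but now with six coordinates (the lower and upper endpoints) in place of three. Weighting by $w_j$ and using $\sum_j w_j=1$ then preserves the bound.

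Properties (2) and (3) require no new ideas. Symmetry holds because every summand is invariant under interchanging $\ddot A$ and $\ddot B$: each product such as ${t^i}^L_A(x_j){t^i}^L_B(x_j)$ commutes, the squared-norm terms enter symmetrically, and the weights $w_j$ do not depend on the sets. For the equality case, substituting the hypothesis $\ddot A=\ddot B$ forces $\mathbf a=\mathbf b$ in every summand, so each numerator coincides with its denominator and the summand collapses to $1$, whereupon the weighted sum reduces to $\sum_{j=1}^n w_j=1$. The only real content is thus the per-summand bound of property (1); once each summand is recognized as a bona fide Jaccard, Dice or Cosine coefficient of nonnegative vectors, the interval doubling of coordinates is the sole point to handle carefully, and properties (2) and (3) follow immediately.
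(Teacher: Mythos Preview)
Your proposal is correct and takes essentially the same approach as the paper, which simply says the proofs ``can be made similar way to proof of Proposition~\ref{pro:svnr}''; you have merely spelled out that deferral by recasting each summand as a vector similarity coefficient of the six-coordinate endpoint vectors and by replacing the uniform factor $\tfrac1n$ with the convex weights $w_j$.
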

\begin{proof}The proofs can be made  similar way to proof of
Proposition \ref{pro:svnr}.
\end{proof}
Note that if
$[{t^i}^L_A(x),{t^i}^U_A(x)]=[0,0]$,$[{i^i}^L_A(x),{i^i}^U_A(x)]=[0,0],[{f^i}^L_A(x),{f^i}^U_A(x)]=[0,0]$
and
$[{t^i}^L_B(x),{t^i}^U_B(x)]=[0,0],[{i^i}^L_B(x),{i^i}^U_B(x)]=[0,0],[{f^i}^L_B(x),{f^i}^U_B(x)]=[0,0]$,
each similarity measure between two INR-sets $\ddot{A}$ and
$\ddot{B}$ are undefined.

\section{Similarity measure based multicriteia decision making under SVNR-environment and INR-environment}
In this section, applications of weighted similarity measures in
multicriteia decision making problems under SVNR-environment and
INR-environment are given.

Let us consider a MCDM problem with $k$ alternatives and $r$
criteria. Let $A=\{A_1,A_2,...,A_k\}$ be a set of alternatives and
$C=\{C_1,C_2,...,C_r\}$ be the set of criteria and
$w=\{w_1,w_2,...w_r\}$ be weights of the criteria $C_j(j=1,2,...,r)$
such that $w_j\geq 0 (j=1,2,...,r)$ and $\sum_{i=}^rw_i=1.$

\subsection{Multi-criteria decision making under SVNR-environment}
 Let $\{A_1,A_2,...,A_k\}$ be a set of alternatives and
$\{C_1,C_2,...,C_r\}$ be a set of criterion. Alternatives $A_i
(i=1,2,...,k)$ are characterized by SVNR-values for each
$C_j(i=1,2,...,r)$ as follows:
$$
A_i=\Big\{\langle
C_j,(t^1_{A_i}(C_j),....,t^p_{A_i}(C_j)),(i^1_{A_i}(C_j),....,i^p_{A_i}(C_j)),(f^1_{A_i}(C_j),....,f^p_{A_i}(C_j))\rangle:
C_j\in C \Big\},
$$
for the sake of shortness, $(t^1_{A_i}(C_j),....,t^p_{A_i}(C_j))$,
$(i^1_{A_i}(C_j),....,i^p_{A_i}(C_j))$  and
$(f^1_{A_i}(C_j),....,f^p_{A_i}(C_j))$ are denoted by
$(t^1_{ij},....,t^p_{ij})$, $(i^1_{ij},....,i^p_{ij})$ and
$(f^1_{ij},....,f^p_{ij})$, respectively. Thus, the evaluation of
the alternative $A_i$ with respect to the criteria $C_j$ made by
expert or decision maker can be briefly written as
$\gamma_{ij}=\langle(t^1_{ij},....,t^p_{ij}),(i^1_{ij},....,i^p_{ij}),\\(f^1_{ij},....,f^p_{ij})\rangle
(i=1,2,...,k; j=1,2,...,r)$. Hence, SVNR-decision matrix
$D=[\gamma_{ij}]_{k\times r}$ can be constructed.

In MCDM environment, to characterize the best alternative properly
in the decision set the notion of the ideal point is used.  To
evaluate the criteria,  two type modifiers called benefit criteria
(BC) and cost criteria (CC) are generally used.

In this study, for benefit criteria (BC) and cost criteria (CC)
ideal SVNR-values denoted by $A^*$ are defined as follows:
 \footnotesize
\begin{itemize}
\item $\gamma_j^*=\langle({t^1}^*_{j},....,{t^p}^*_{j}),({i^1}^*_{j},....,{i^p}^*_{j}),({f^1}^*_{j},....,{f^p}^*_{j})\rangle
=\left\langle\begin{array}{c}(max_i(t^1_{ij}),....,max_i(t^p_{ij})),(min_i(i^1_{ij}),....,min_i(i^p_{ij})),
\\(min_i(f^1_{ij}),....,min_i(f^p_{ij}))\end{array}\right\rangle$
\item $\gamma_j^*=\langle({t^1}^*_{j},....,{t^p}^*_{j}),({i^1}^*_{j},....,{i^p}^*_{j}),({f^1}^*_{j},....,{f^p}^*_{j})\rangle
=\left\langle\begin{array}{c}(min_i(t^1_{ij}),....,min_i(t^p_{ij})),(max_i(i^1_{ij}),....,max_i(i^p_{ij})),
\\(max_i(f^1_{ij}),....,max_i(f^p_{ij}))\end{array}\right\rangle,$
\end{itemize}
\normalsize respectively. Here equations are called positive ideal
solution and negative ideal solution, respectively.

\section*{Algorithm}
\begin{itemize}

\item \textbf{Step 1:} \textbf{Determination of BC and CC criteria.}

\item \textbf{Step 2:} \textbf{Determination of ideal SVNR-values $A^*$}

\item \textbf{Step 3:} \textbf{Calculation of weighted similarity measures}\\
In this step, using one of the Eq. (\ref{eq:wjac-svr}), Eq.
(\ref{eq:wdice-svr}) or Eq.(\ref{eq:wcos-svr}) weighted similarity
measures between the ideal alternative $A^*$ and $A_i(i=1,2,...,k)$
are calculated.

\item \textbf{Step 4:} \textbf{Ranking of the alternative}\\
Considering the values obtained using one of the Eq.
(\ref{eq:wjac-svr}), Eq. (\ref{eq:wdice-svr}) or
Eq.(\ref{eq:wcos-svr}), the ranking order of all the alternatives
can be easily determined.
\end{itemize}

\section*{Illustrative example 1}

Let us consider the decision making problem given in \cite{ye-15d}.
We adapt this decision making problem to SVNR-set. There is an
investment company, which wants to invest a sum money in the best
option. There is a panel with four possible alternatives to invest
the money: (1) $A_1$ is a car company; (2) $A_2$ is a food company;
(3) $A_3$ is a computer company; (4) $A_4$ is an arms company. The
investment company must take a decision according to the three
criteria (1) $C_1$ is the risk; (2) $C_2$ is the growth; (3) $C_3$
is an environmental impact,
The weights of criteria $C_1,C_2$ and $C_3$ are given by $w_1=0.35,
w_2=0.25$ and $w_3=0.40$, respectively. The four alternatives are to
evaluated under the criteria by SVNR-values provided by decision
maker. These values are shown in SVNR-decision matrix as follows:
\scriptsize
$$D=[\gamma_{ij}]_{k\times r}=\left(
  \begin{array}{ccc}
    \langle (.1,.2,.4),(.3,.3,.5),(.2,.4,.8)\Big
\rangle  & \Big \langle (.1,.4,.6),(.2,.3,.7),(.1,.3,.3)\Big \rangle
& \Big \langle (.0,.3,.3),(.1,.5,.6),(.5,.6,.9)\Big
\rangle \\
    \Big \langle (.5,.6,.7),(.2,.4,.4),(.1,.6,.6)\Big
\rangle  & \Big \langle (.4,.6,.7),(.2,.5,.8),(.1,.5,.5)\Big \rangle
& \Big \langle (.3,.3,.4),(.2,.6,.7),(.3,.4,.7)\Big
\rangle  \\
    \Big \langle (.3,.3,.5),(.0,.1,.3),(.1,.4,.7)\Big
\rangle  & \Big \langle (.4,.5,.6),(.2,.3,.6),(.1,.3,.4)\Big \rangle
& \Big \langle (.1,.3,.4),(.1,.4,.6),(.3,.3,.5)\Big
\rangle  \\
    \Big \langle (.2,.4,.9),(.1,.5,.6),(.3,.5,1)\Big
\rangle  & \Big \langle (.0,.2,.4),(.1,.5,.7),(.6,.7,.9)\Big \rangle
& \Big \langle (.8,.8,.9),(.3,.4,.4),(.6,.6,.8)\Big
\rangle  \\
  \end{array}
\right)$$ \normalsize
\begin{itemize}
\item \textbf{Step 1:} Let us consider $C_1$ and $C_2$ as  benefit criteria and  $C_3$ as cost
criterion.
\item \textbf{Step 2:} From SVNR-decision matrix, ideal alternative $A^*$ can be obtained
as follows: \footnotesize
$$A^*=\Big\{\big\langle(.5,.6,.9),(.0,.1,.3),(.1,.4,.6)\big\rangle,\big\langle(.4,.6,.7),(.1,.3,.6),(.1,.3,.3)\big\rangle
,\big\langle(.0,.3,.3),(.3,.6,.7),(.6,.6,.9)\big\rangle\Big\}.$$
\normalsize
\item \textbf{Step 3:} By using the Eqs. (\ref{eq:jac-svr}), (\ref{eq:wjac-svr}),
 (\ref{eq:dice-svr}), (\ref{eq:wdice-svr}), (\ref{eq:cos-svr}) and (\ref{eq:wcos-svr}), for $\Lambda\in\{J,D,C\}$, similarity
measures and weighted similarity measures are obtained as in Table
\ref{tab:results2}
\begin{table}[htbp]
    \caption{Similarity measure values under INR-environment}
    \label{tab:results2}
    \centering
    \begin{tabular}{ccc}
        \hline Similarity measure & Values & Ranking order\\
        \hline $(A^{*}, A_{i})_J$ &
        $
        \begin{array}{cc}
        (A^{*}, A_{1})_J=0.83489
\\
        (A^{*}, A_{2})_J=0.90254
\\
        (A^{*}, A_{3})_J=0.86578
\\
        (A^{*}, A_{4})_J=0.65791

        \end{array}
        $
        & $ A_{2}\succ  A_{3}\succ  A_{1}\succ  A_{4} $\\
        \hline $(A^{*}, A_{i})_D$&
        $
        \begin{array}{cc}
        (A^{*}, A_{1})_D=0.90283
\\
        (A^{*}, A_{2})_D=0.94872
\\
        (A^{*}, A_{3})_D=0.92618
\\
        (A^{*}, A_{4})_D=0.78961

        \end{array}
        $
        & $ A_{2}\succ  A_{3}\succ  A_{1}\succ  A_{4} $\\
        \hline $ (A^{*}, A_{i})_C$&
        $
        \begin{array}{cc}
        (A^{*}, A_{1})_C=0.90937
\\
        (A^{*}, A_{2})_C=0.95841
\\
        (A^{*}, A_{3})_C=0.96019
\\
        (A^{*}, A_{4})_C=0.80492

        \end{array}
        $
        & $ A_{3}\succ  A_{2}\succ  A_{1}\succ  A_{4} $\\
        \hline $ W(A^{*}, A_{i})_J$&
        $
        \begin{array}{cc}
        W(A^{*}, A_{1})_J=0.83534
\\
        W(A^{*}, A_{2})_J=0.75035
\\
        W(A^{*}, A_{3})_J=0.85113
\\
        W(A^{*}, A_{4})_J=0.66726

        \end{array}
        $
        & $ A_{3}\succ  A_{1}\succ  A_{2}\succ  A_{4} $\\
        \hline $ W(A^{*}, A_{i})_D$&
        $
        \begin{array}{cc}
        W(A^{*}, A_{1})_D=0.90259
\\
        W(A^{*}, A_{2})_D=0.94726
\\
        W(A^{*}, A_{3})_D=0.91794
\\
        W(A^{*}, A_{4})_D=0.79671

        \end{array}
        $
        & $ A_{2}\succ  A_{3}\succ  A_{1}\succ  A_{4} $\\
        \hline$ W(A^{*}, A_{i})_C$&
        $
        \begin{array}{cc}
        W(A^{*}, A_{1})_C=0.90911
\\
        W(A^{*}, A_{2})_C=0.95613
\\
        W(A^{*}, A_{3})_C=0.95695
\\
        W(A^{*}, A_{4})_C=0.81158

        \end{array}
        $
        & $ A_{3}\succ  A_{2}\succ  A_{1}\succ  A_{4} $\\
        \hline
    \end{tabular}
\end{table}
\item \textbf{Step 4:} Rankings of the alternatives are shown in last columnn of Table
\ref{tab:results2}.

\end{itemize}
\subsection{Multi-attribute decision making under INR-environment}
Let alternatives $A_i (i=1,2,...,k)$ are characterized by INR-values
for each criterion $C_j(i=1,2,...,r)$ as follows: \footnotesize
$$
A_i=\left\{\begin{array}{c}\langle
C_j,([{t^1}^L_{A_i}(C_j),{t^1}^U_{A_i}(C_j)],....,[{t^p}^L_{A_i}(C_j),{t^p}^U_{A_i}(C_j)]),
([{i^1}^L_{A_i}(C_j),{i^1}^U_{A_i}(C_j)],....,[{i^p}^L_{A_i}(C_j),{i^p}^U_{A_i}(C_j)]),\\
([{f^1}^L_{A_i}(C_j),{f^1}^U_{A_i}(C_j)],....,[{f^p}^L_{A_i}(C_j),{f^p}^U_{A_i}(C_j)])\rangle:
C_j\in C \end{array}\right\}.
$$
\normalsize
 For convenience,
$([{t^1}^L_{A_i}(C_j),{t^1}^U_{A_i}(C_j)],....,[{t^p}^L_{A_i}(C_j),{t^p}^U_{A_i}(C_j)]),
([{i^1}^L_{A_i}(C_j),{i^1}^U_{A_i}(C_j)],....,[{i^p}^L_{A_i}(C_j),$\\${i^p}^U_{A_i}(C_j)])$
and
$([{f^1}^L_{A_i}(C_j),{f^1}^U_{A_i}(C_j)],....,[{f^p}^L_{A_i}(C_j),{f^p}^U_{A_i}(C_j)])$
are denoted by
$([{t^1}^L_{ij}],....,[{t^p}^U_{ij}])$,\\$([{i^1}^L_{ij}],....,[{i^p}^U_{ij}])$
and $([{f^1}^L_{ij}],....,[{f^p}^U_{ij}])$, respectively. So
INR-value
$\theta_{ij}=\langle([{t^1}^L_{ij},{t^1}^U_{ij}],....,[{t^p}^L_{ij},{t^p}^U_{ij}]),$\\$([{i^1}^L_{ij},{i^1}^U_{ij}],....,[{i^p}^L_{ij},{i^p}^U_{ij}]),
([{f^1}^L_{ij},{f^1}^U_{ij}],....,[{f^p}^L_{ij},{f^p}^U_{ij}])\rangle
(i=1,2,...,k; j=1,2,...,r)$ which is generally obtained from the
evaluation of the alternative $A_i$ with related to the criteria
$C_j$ by opinion of expert or decision maker. Thus, INR-decision
matrix $D=[\theta_{ij}]_{k\times r}$ can be constructed.

In this study, for benefit criteria (BC) and cost criteria (CC)
ideal INR-values denoted by $A^*$ are defined as follows:
\footnotesize
\begin{itemize}
\item $\theta_j^*=\langle({t^1}^*_{j},....,{t^p}^*_{j}),({i^1}^*_{j},....,{i^p}^*_{j}),({f^1}^*_{j},....,{f^p}^*_{j})\rangle
=\left\langle\begin{array}{c}([max_i({t^1}^L_{ij}),max_i({t^1}^U_{ij})],....,[max_i({t^p}^L_{ij}),max_i({t^p}^U_{ij})]),\\
([min_i({i^1}^L_{ij}),min_i({i^1}^U_{ij})],....,[min_i({i^p}^L_{ij}),min_i({i^p}^U_{ij})]),
\\([min_i({f^1}^L_{ij}),min_i({f^1}^U_{ij})],....,[min_i({f^p}^L_{ij}),min_i({f^p}^U_{ij})])\end{array}\right\rangle$
\\
\\
\item $\theta_j^*=\langle({t^1}^*_{j},....,{t^p}^*_{j}),({i^1}^*_{j},....,{i^p}^*_{j}),({f^1}^*_{j},....,{f^p}^*_{j})\rangle
=\left\langle\begin{array}{c}([min_i({t^1}^L_{ij}),min_i({t^1}^U_{ij})],....,[min_i({t^p}^L_{ij}),min_i({t^p}^U_{ij})]),\\
([max_i({i^1}^L_{ij}),max_i({i^1}^U_{ij})],....,[max_i({i^p}^L_{ij}),max_i({i^p}^U_{ij})]),
\\([max_i({f^1}^L_{ij}),max_i({f^1}^U_{ij})],....,[max_i({f^p}^L_{ij}),max_i({f^p}^U_{ij})])\end{array}\right\rangle,$
\end{itemize}
\normalsize respectively.
\section*{Algorithm}
\begin{itemize}
\item \textbf{Step 1:} \textbf{Determination of BC and CC criteria.}

\item \textbf{Step 2:} \textbf{Determination of ideal  INR-values $A^*$ solution}

\item \textbf{Step 3:} \textbf{Calculation of weighted similarity measures}\\
In this step, using one of the Eq. (\ref{eq:wjac-inr}), Eq.
(\ref{eq:wdice-inr}) or Eq.(\ref{eq:wcos-inr}) weighted similarity
measures between the ideal alternative $A^*$ and INR-sets
$A_i(i=1,2,...,k)$ are calculated.

\item \textbf{Step 4:} \textbf{Ranking of the alternative}
According to the values obtained using one of the Eq.
(\ref{eq:wjac-inr}), Eq. (\ref{eq:wdice-inr}) or
Eq.(\ref{eq:wcos-inr}), the ranking order of all the alternatives
can be easily determined.
\end{itemize}

\section*{Illustrative example 2}\label{ex:inrsim}

In this example, alternatives and criteria given in previous
illustrative example  will be considered under INR-environment. The
four alternatives are to evaluated under the criteria by INR-values
provided by decision maker. These values are shown in INR-decision
matrix as follows: \footnotesize
$$D=[\gamma_{ij}]_{k\times r}=\left(
 \begin{array}{ccc}
    \left\langle \begin{array}{c}([.2,.3],[.2,.5],[.4,.7]),\\
    ([.3,.4],[.3,.6],[.5,.9]),\\([.2,.5],[.4,.7],[.8,.8])\\
    \end{array}\right\rangle  & \left\langle \begin{array}{c}
([.1,.5],[.4,.5],[.6,1]),\\([.2,.4],[.3,.7],[.7,.8]),\\([.1,.2],[.3,.8],[.3,.8])\\
\end{array}\right\rangle &
\left\langle \begin{array}{c}
([.0,.3],[.3,.5],[.3,9]),\\([.1,.2],[.5,.6],[.6,.6]),\\([.5,.5],[.6,.7],[.9,.9])\\
\end{array}\right\rangle \\
     \left\langle \begin{array}{c}([.1,.2],[.2,.8],[.4,.8]),\\([.4,.5],[.3,.6],[.5,.7]),\\([.1,.3],[.4,.5],[.8,.8])\\
     \end{array}\right\rangle  & \left\langle \begin{array}{c}
([.1,.4],[.4,.5],[.6,.6]),\\([.2,.3],[.3,.4],[.7,.8]),\\([.1,.5],[.3,.6],[.3,.7])\\
\end{array}\right\rangle & \left\langle \begin{array}{c}
([.0,.3],[.3,.4],[.3,5]),\\([.1,.6],[.5,.6],[.6,.7]),\\([.5,.8],[.6,.8],[.9,1])\\
\end{array}\right\rangle\\
    \left\langle \begin{array}{c}([.1,.4],[.2,.5],[.4,.6]),\\([.3,.4],[.3,.4],[.6,.7]),\\([.2,.3],[.4,.5],[.8,1])\\ \end{array}\right\rangle  & \left\langle \begin{array}{c}
([.2,.3],[.4,.5],[.6,.7]),\\([.2,.5],[.3,.6],[.7,.8]),\\([.1,.2],[.3,.4],[.4,.5])\\
\end{array}\right\rangle & \left\langle \begin{array}{c}
([.0,.1],[.3,.3],[.3,.4]),\\([.1,.2],[.5,.6],[.6,.7]),\\([.5,.6],[.6,.7],[.9,.9])\\
\end{array}\right\rangle \\
    \left\langle \begin{array}{c}([.1,.4],[.2,.4],[.4,.4]),\\([.3,.5],[.3,.6],[.5,.6]),\\([.2,.5],[.4,.6],[.8,.9])\\ \end{array}\right\rangle  & \left\langle \begin{array}{c}
([.1,.5],[.4,.5],[.4,.6]),\\
([.2,.4],[.3,.5],[.7,.9]),\\([.2,.2],[.3,.4],[.3,.4])\\
\end{array}\right\rangle & \left\langle \begin{array}{c}
([.0,.2],[.3,.4],[.3,.5]),\\
([.1,.4],[.5,.6],[.6,.8]),\\([.5,.6],[.6,.7],[.9,1])\\
\end{array}\right\rangle \\
  \end{array}
\right)$$ \normalsize
\begin{itemize}
\item \textbf{Step 1:} Let us consider $C_1$ and $C_2$ as  benefit criteria and  $C_3$ as cost
criterion.
\item \textbf{Step 2:} From INR-decision matrix, ideal alternative $A^*$ can be obtained
as follows:

\begin{eqnarray*}
A^*&=&\Big\{\langle([.2,.4],[.2,.8],[.4,.8]),([.3,.4],[.3,.4],[.5,.6]),([.1,.3],[.4,.5],[.8,.8])\rangle,\\
&&\langle([.2,.5],[.4,.5],[.6,1]),([.2,.3],[.3,.4],[.7,.8]),([.1,.2],[.3,.4],[.3,.4])\rangle\\
&&\langle([.0,.1],[.3,.3],[.3,.4]),([.1,.6],[.5,.6],[.6,.8]),([.5,.8],[.6,.8],[.9,1])\rangle\Big\}.
\end{eqnarray*}
\item \textbf{Step 3:} By using the Eq. (\ref{eq:wjac-inr}), Eq. (\ref{eq:wdice-inr}) and Eq. (\ref{eq:wcos-inr}) similarity
measures and weighted similarity measures are obtained as shown in
Table \ref{tab:results4}.

\item \textbf{Step 4:} Rankings of the alternatives are shown in last column of Table
\ref{tab:results4}.

\begin{table}[htbp]
    \caption{Similarity measure values and ranking of alternatives under INR-environment}
    \label{tab:results4}
    \centering
    \begin{tabular}{ccc}
        \hline Similarity measure & Values & Ranking order\\
        \hline $(A^{*}, A_{i})_J$ &
        $
        \begin{array}{cc}
        (A^{*}, A_{1})_J=0.83489
\\
        (A^{*}, A_{2})_J=0.95699
\\
        (A^{*}, A_{3})_J=0.95304
\\
        (A^{*}, A_{4})_J=0.94042
        \end{array}
        $
        & $ A_{2}\succ  A_{3}\succ  A_{4}\succ  A_{1} $\\
        \hline $(A^{*}, A_{i})_D$&
        $
        \begin{array}{cc}
        (A^{*}, A_{1})_D=0.90283
\\
        (A^{*}, A_{2})_D=0.97768
\\
        (A^{*}, A_{3})_D=0.97595
\\
        (A^{*}, A_{4})_D=0.96903
        \end{array}
        $
        & $ A_{2}\succ  A_{3}\succ  A_{4}\succ  A_{1} $\\
        \hline $ (A^{*}, A_{i})_C$&
        $
        \begin{array}{cc}
        (A^{*}, A_{1})_C=0.90937
\\
        (A^{*}, A_{2})_C=0.97790
\\
        (A^{*}, A_{3})_C=0.97758
\\
        (A^{*}, A_{4})_C=0.97007
        \end{array}
        $
        & $ A_{2}\succ  A_{3}\succ  A_{4}\succ  A_{1} $\\
        \hline $ W(A^{*}, A_{i})_J$&
        $
        \begin{array}{cc}
        W(A^{*}, A_{1})_J=0.83534
\\
        W(A^{*}, A_{2})_J=0.96355
\\
        W(A^{*}, A_{3})_J=0.95420
\\
        W(A^{*}, A_{4})_J=0.94270
        \end{array}
        $
        & $ A_{2}\succ  A_{3}\succ  A_{4}\succ  A_{1} $\\
        \hline $ W(A^{*}, A_{i})_D$&
        $
        \begin{array}{cc}
        W(A^{*}, A_{1})_D=0.90259
\\
        W(A^{*}, A_{2})_D=0.98114
\\
        W(A^{*}, A_{3})_D=0.97656
\\
        W(A^{*}, A_{4})_D=0,97021
        \end{array}
        $
        & $ A_{2}\succ  A_{3}\succ  A_{4}\succ  A_{1} $\\
        \hline$ W(A^{*}, A_{i})_C$&
        $
        \begin{array}{cc}
        W(A^{*}, A_{1})_C=0.90911
\\
        W(A^{*}, A_{2})_C=0.98138
\\
        W(A^{*}, A_{3})_C=0.97849
\\
        W(A^{*}, A_{4})_C=0.97112
        \end{array}
        $
        & $ A_{2}\succ  A_{3}\succ  A_{4}\succ  A_{1} $\\
        \hline
    \end{tabular}
\end{table}

\end{itemize}

\section{Consistency analysis of similarity measures based INR-sets}

In this section, to determine  which similarity measure gives more
consistent results, a method is given.

Let $A=\{A_1,A_2,...,A_n\}$ be  a set of alternatives,
$C=\{C_1,C_2,...,C_k\}$ be a set of criteria and $A^*$ be set of
ideal alternative values obtained from decision matrix defined in
illustrative example of similarity measures based on INR-set. Then,
consistency of the similarity measures based INR-values is define by
as follows:

$$
C( A^*, A_i)_{\Lambda}=\frac{1}{n}\sum_{i=1}^n|({
A^*}^L,{A_i}^L)_{\Lambda}-({A^*}^U,{A_i}^U)_{\Lambda}|.
$$
Here, ${A^*}^L$ and ${A^*}^U$ are determined with help of
INR-decision matrix using formula of benefit criteria (BC) and cost
criteria (CC) given  as follows: For $\Delta\in\{L=lower, U=upper\}$
\footnotesize
\begin{itemize}
\item ${\delta_j^*}^{\Delta}=\langle({{t^1}^*}^{\Delta}_{j},....,{{t^p}^*}^{\Delta}_{j}),
({{i^1}^*}^{\Delta}_{j},....,{{i^p}^*}^{\Delta}_{j}),({{f^1}^*}^{\Delta}_{j},....,{{f^p}^*}^{\Delta}_{j})\rangle
=\left\langle\begin{array}{c}(max_i({t^1}^{\Delta}_{ij}),....,max_i({t^p}^{\Delta}_{ij})),\\
(min_i({i^1}^{\Delta}_{ij}),....,min_i({i^p}^{\Delta}_{ij})),
\\(min_i({f^1}^{\Delta}_{ij}),....,min_i({f^p}^{\Delta}_{ij})])\end{array}\right\rangle$
\\
\item ${\delta_j^*}^{\Delta}=\langle({{t^1}^*}^{\Delta}_{j},....,{{t^p}^*}^{\Delta}_{j}),({{i^1}^*}^{\Delta}_{j},....,{{i^p}^*}^{\Delta}_{j}),({{f^1}^*}^{\Delta}_{j},....,{{f^p}^*}^{\Delta}_{j})\rangle
=\left\langle\begin{array}{c}(min_i({t^1}^{\Delta}_{ij}),....,min_i({t^p}^{\Delta}_{ij})),\\
(max_i({i^1}^{\Delta}_{ij}),....,max_i({i^p}^{\Delta}_{ij})),
\\(max_i({f^1}^{\Delta}_{ij}),....,max_i({f^p}^{\Delta}_{ij}))\end{array}\right\rangle,$
\end{itemize}
\normalsize
respectively.\\
 Also ${A_i}^L=\Big\{\Big\langle
({t^1}^{L}_{A_i}(C_j),...,{t^p}^{L}_{A_i}(C_j)),
({i^1}^{L}_{A_i}(C_j),...,{i^p}^{L}_{A_i}(C_j)),
({f^1}^{L}_{A_i}(C_j),...,{f^p}^{L}_{A_i}(C_j))\Big\rangle: C_j\in C
\textrm{ and } i\in I_p\Big\}$ and ${A_i}^U=\Big\{\Big\langle
({t^1}^{U}_{A_i}(C_j),...,{t^p}^{U}_{A_i}(C_j)),
({i^1}^{U}_{A_i}(C_j),...,{i^p}^{U}_{A_i}(C_j)),
({f^1}^{U}_{A_i}(C_j),...,{f^p}^{U}_{A_i}(C_j))\Big\rangle: C_j\in C
\textrm{ and } i\in I_p\Big\}$.

\begin{exmp} Let us consider the Example \ref{ex:inrsim}. Then
for all $\Lambda\in \{J,D,C\}$, results and orderings are obtained
as in Table \ref{tab:results5},
\begin{table}[htbp]
    \caption{Consistency degrees of similarity measures under INR environment}
    \label{tab:results5}
    \centering
    \begin{tabular}{cccc}
        \hline Similarity measure & $({A^*}^L, A^L_{i})$ & $({A^*}^U, A^U_{i})$& $C(A^*,A_i)_{\Lambda}$\\
        \hline Jaccard &
        $
        \begin{array}{cc}
        0,99220
\\
        0,99313
\\
        0,99118
\\
        0,98014
        \end{array}
        $
        &
        $
        \begin{array}{cc}
0,87306
\\
0,93997
\\
0,93188
\\
0,92097
        \end{array}
        $
        & $0,07269$

\\
        \hline Dice &
          $
        \begin{array}{cc}
        0,99608\\
        0,99655\\
        0,99556\\
        0,98987
        \end{array}
        $&
        $
        \begin{array}{cc}
        0,93187\\
        0,96832\\
        0,96472\\
        0,95835
        \end{array}
        $
        & $0,03870$
\\
        \hline Cosine &
        $
        \begin{array}{cc}
0,99649\\
0,99660\\
0,99587\\
0,99089
        \end{array}
        $
        &
        $
        \begin{array}{cc}
        0,94012\\
0,96869\\
0,96947\\
0,95975
        \end{array}
        $
        & $0,03545$
\\
        \hline $ Weighted \, Jaccard$&
        $
         \begin{array}{cc}
0,99207
\\
0,99353
\\
0,99145
\\
0,98376
        \end{array}
        $
        &
        $
        \begin{array}{cc}
        0,86958
\\
0,94931
\\
0,93316
\\
0,92248
        \end{array}
        $
        & $ 0,07157
 $\\
        \hline $ Weighted \, Dice$&
        $
         \begin{array}{cc}
0,99602
\\
0,99674
\\
0,99569
\\
0,99173
        \end{array}
        $
        &
        $
        \begin{array}{cc}
        0,92984
\\
0,97337
\\
0,96541
\\
0,95912
        \end{array}
        $
        & $0,03811
$\\
        \hline$Weighted \, Cosine$&
        $
         \begin{array}{cc}
0,99649
\\
0,99679
\\
0,99599
\\
0,99249
        \end{array}
        $
        &
        $
        \begin{array}{cc}
        0,93678
\\
0,97377
\\
0,97097
\\
0,96049
        \end{array}
        $
        & $ 0,03494
 $\\
        \hline
    \end{tabular}
\end{table}
\end{exmp}
Note that, $C(A^*,A_i)_{J}\geq C(A^*,A_i)_{D}\geq C(A^*,A_i)_{C}$.
Since consistency degree of Jaccard similarity measure under INR
environment  is higher than consistency degrees of Dice and Cosine
similarity measures, it is more convenient using the Jaccard
similarity measure for discussed problem.

\section{Conclusion}
In this paper,  for SVNR-set and INR-sets three similarity measures
method developed based on Jaccard, Dice and Cosine similarity
measures. Furthermore, applications of proposed similarity measure
methods are given in multi-criteria decision making and a method is
developed to compare similarity measures of INR-sets, and an
application of this method is given.  However, I hope that the main
thrust of proposed formulas will be in the field of equipment
evaluation, data mining and investment decision making. Also in
future, similarity measure methods for INR-sets can be proposed
based on the methods other than Jaccard, Dice and Cosine similarity
measures.


\end{document}